\documentclass[reqno]{amsart}
\textheight=8.3 true in
\topmargin -20pt
\numberwithin{equation}{section} \setcounter{page}{1}
\usepackage{graphicx}
\usepackage{wrapfig}

\newtheorem{theorem}{\sc Theorem}[section]
\newtheorem{lemma}{\sc Lemma}[section]

\newtheorem{remark}{\sc Remark}

\def\bdy #1{\partial #1}
\def\cls #1{\overline {#1}}

\def\bbR{{\mathbb R}}

\def\div{\operatorname{div}}

\def\curl{\operatorname{curl}}
\def\id{{\text{Id}}}
\def\supp{{\text{supp}}}
\def\bA{{\bar{A}}}
\def\ak{{a_\kappa}}
\def\ba{{\bar{a}_\kappa}}
\def\gk{{g_\kappa}}
\def\bg{{\bar{g}}}
\def\be{{\bar{\eta}}}
\def\bek{{\bar{\eta}_\kappa}}
\def\nk{{n_\kappa}}
\def\bn{\bar{n}_\kappa}

\def\bv{\bar{v}}
\def\bvk{{\bar{v}_\kappa}}
\def\bq{{\bar{q}}}

\def\bw{{\bar{w}}}

\def\tv{{\tilde{v}}}
\def\mJ{{\mathcal J}}
\def\bJ{\bar{\mathcal J}_\kappa}
\def\mI{{\mathcal I}}
\def\Op{{\Omega^+}}
\def\Om{{\Omega^-}}
\def\Gpm{{\Gamma}}
\def\Go{{\partial\Omega}}
\def\mP{{\mathcal P}}
\def\solnpspace{{H^{5.5}(\Op)}}

\def\solnspace{{H^{5.5}(\Omega)}}
\def\Ek{{E_\kappa}}

\def\mV{{\mathcal V}}
\def\sign{{\text{sign}}}
\def\E{{\mathcal E}}

\begin{document}
\title[Vortex sheets with surface tension]
{On the motion of vortex sheets with surface tension in the 3D
Euler equations with vorticity}

\author{C.H. Arthur Cheng}
\email{cchsiao@math.ucdavis.edu}
\author{Daniel Coutand}
\email{coutand@math.ucdavis.edu}
\author{Steve Shkoller}
\email{shkoller@math.ucdavis.edu}

\address{Department of Mathematics, University of California, Davis, CA 95616}
\date{May 1, 2007}

\maketitle

\section{Introduction}
The motion of vortex sheets with surface tension has been analyzed in the setting of
irrotational flows by Ambrose \cite{Am2003} and Ambrose \& Masmoudi \cite{AmMa2005} in 2D,
and by Ambrose \& Masmoudi \cite{AmMa2007} in 3D.   With irrotationality, the nonlinear
Euler equations reduce to the Laplace equation for the pressure function in the bulk, and
the motion of the vortex sheet is decoupled from that of the fluid, thus allowing
boundary integral methods to be employed.  In a general flow with vorticity, the full
two-phase Euler equations must be analyzed; in this situation, the vortex sheet is a
surface of discontinuity representing the material interface between two 
incompressible inviscid fluids with densities $\rho^+$ and $\rho^-$, respectively.  
The tangential velocity of the fluid suffers a jump
discontinuity along the material interface, leading to the well-known Kelvin-Helmholtz or 
Rayleigh-Taylor instabilities when surface tension is neglected.  The velocity of the 
vortex sheet is the normal component of the fluid velocity, whose continuity  across
the material interface $\Gamma(t)$ is enforced.  In addition to incompressibility, the continuity 
of the  normal component of velocity  across $\Gamma(t)$ is a fundamental difference between multi-D shock
wave evolution, wherein the velocity of the surface of discontinuity is determined by
the generalized Rankine-Hugoniot condition.  Nevertheless, the problems are mathematically
very similar, and we refer the reader to the book of Majda  \cite{Majda1984} for the
analysis of multi-D shocks. 

In the incompressible, {\it rotational} flow-setting, very little analysis has been made
of the two-phase Euler equations.  With surface tension
present, Shatah and Zeng \cite{ShZe2006} have obtained formal a priori estimates for 
smooth enough solutions, but the question of existence of smooth solutions remains open.
In this paper, following the methodology of Coutand \& Shkoller \cite{CoSh2007}, we prove
well-posedness for short-time for this problem.  

Let $\Op$ and $\Om$ denote two open bounded subsets of $\bbR^3$ such that
$\Omega = \Op \cup \Om$ denotes the total volume occupied by the two fluids,
and $\Gamma = \cls\Op \cap \cls\Om$ denotes the material interface.  We assume that
it is the region $\cls\Om$ that intersects $\partial \Omega$.

Let $\eta$ denote the Lagrangian flow map, satisfying
\begin{align*}
\eta_t(x,t) &= u(\eta(x,t),t) \qquad\forall\ x\in \Omega, t>0\,, \\
\eta(x,0)&=x \,.
\end{align*}
Let $\Op(t)$, $\Om(t)$ and $\Gpm(t)$ denote $\eta(t)(\Op)$,
$\eta(t)(\Om)$ and $\eta(t)(\Gpm)$, respectively, and let $u^\pm$ and
$p^\pm$ denote the velocity field and pressure function, respectively, 
in $\Omega^\pm(t)$. The incompressible Euler equations for the motion of two fluids
can be written as
\begin{subequations}\label{Euler}
\begin{alignat}{2}
\rho^\pm(u^\pm_t + \nabla_{u^\pm} u^\pm) + \nabla p^\pm &= 0 &&\qquad \text{in
\ \
$\Omega^\pm(t)$}\,, \\
\div u^\pm &= 0 &&\qquad \text{in
\ \
$\Omega^\pm(t)$}\,, \\
[p]_\pm &= \sigma H &&\qquad \text{on \ \ $\Gpm(t)$}\,,\\
[u\cdot n]_\pm &=0 &&\qquad \text{on \ \ $\Gpm(t)$}\,, \\
u^-\cdot n &= 0 &&\qquad\text{on \ \ $\bdy\Omega$}\,,\label{nonslipbc}\\
u(0) &= u_0 &&\qquad \text{on \ \ $\{t=0\}\times \Omega$}\,,
\end{alignat}
\end{subequations}
where the material interface $\Gamma(t)$ moves with speed $u(t)^+\cdot n(t)$, 
$\rho^+$ and $\rho^-$ are the densities of the two fluids
occupying $\Op(t)$ and $\Om(t)$, respectively,  $H(t)$ is twice the mean curvature of
$\Gpm(t)$, $\sigma >0$ is the surface tension parameter, 
and $n(t)$ denotes the outward-pointing unit normal on $\partial \Op(t)$.

\begin{theorem}[Main result]
Suppose that $\sigma>0$, $\Gamma$ is of class $H^4$, $\bdy\Omega$ is
of class $H^3$, and $u_0^\pm\in H^3(\Omega^\pm)$. Then, there exists
$T>0$, and a solution $(u^\pm(t),p^\pm(t),\Omega^\pm(t))$ of
(\ref{Euler}) with $u^\pm \in L^\infty(0,T;H^3(\Omega^\pm(t))$,
$p^\pm\in L^\infty(0,T;H^{2.5}(\Omega^\pm(t))$, and $\Gamma(t)\in
H^4$. The solution is unique if $u_0^\pm\in H^{4.5}(\Omega^\pm)$ and
$\Gamma\in H^{5.5}$.
\end{theorem}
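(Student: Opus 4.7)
The plan is to follow the Lagrangian approximation scheme developed in \cite{CoSh2007} for the one-phase free-boundary Euler problem, adapting it to the two-phase vortex sheet setting. I would first recast (\ref{Euler}) on the fixed reference domains $\Omega^\pm$ via the flow maps $\eta^\pm$, taking as unknowns $\eta^\pm$, $v^\pm := u^\pm\circ\eta^\pm$ and $q^\pm := p^\pm\circ\eta^\pm$. With $A := (\nabla\eta)^{-1}$ and $J := \det\nabla\eta$, the momentum equation becomes $\rho^\pm v^\pm_t + A^T\nabla q^\pm = 0$, the incompressibility constraint becomes $\div_A v^\pm = 0$, and the interface conditions reduce to $[q]_\pm = \sigma H(\eta)$ together with continuity of $v\cdot n(\eta)$ across $\Gamma$, while $v^-\cdot n=0$ on $\partial\Omega$. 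Taking the divergence of the momentum equation yields elliptic problems for $q^\pm$ with data coupled through $\Gamma$, and the induced nonlinear evolution of $\eta|_\Gamma$ has principal part given by the third-order surface-tension operator $-\sigma\Delta_g\eta$.

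Since tangential derivatives of $v^\pm$ at $\Gamma$ formally lose regularity in any naive iteration, I would regularize by a horizontal convolution at the interface, replacing $\eta|_\Gamma$ by $\bek$ and the induced geometric coefficients by $\bA$, $\bJ$, $\bg$, $\bn$, while keeping the principal linear operator unsmoothed. Existence for this $\kappa$-problem follows from a contraction argument in a high-order Sobolev ball on a short, possibly $\kappa$-dependent, time interval: given a candidate velocity $\bvk$, freeze the geometry, solve the linear elliptic problems for $\bqk$ on each subdomain with curvature Dirichlet data on $\Gamma$, then solve the linear parabolic-type equation for $\eta$ governed by $\sigma\Delta_{\bg}\eta\cdot\bn$ on $\Gamma$ and read off $v$; the smoothing of the surface-tension operator closes the map.

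The analytic heart of the argument is a $\kappa$-independent energy estimate for a higher-order quantity $\Ek(t)$ controlling $v^\pm$ in $H^3(\Omega^\pm)$, $\eta|_\Gamma$ in $H^4(\Gamma)$, and the appropriate time derivatives. Differentiating the regularized momentum equation with three tangential derivatives, and separately with two time derivatives, testing against the corresponding derivative of $v^\pm$, summing over both phases and integrating by parts produces a boundary contribution on $\Gamma$ pairing the jump $[q]_\pm$ with the normal component of the test function; the surface-tension term $\sigma H(\bek)$ then delivers a coercive quantity controlling $\eta|_\Gamma$ in $H^4$. Interior norms of $v^\pm$ are recovered from $\div_{\bA}v^\pm = 0$ and from the Lagrangian vorticity identity, which propagates $\curl u_0$ and substitutes for the missing direct normal-derivative estimate. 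The hard part will be showing that the many commutators generated by the horizontal convolution and by the quasilinear coefficients, in particular those that formally lose one derivative on $\Gamma$, are either absorbed into the surface-tension coercive term or bounded, uniformly in $\kappa$, by a polynomial $P(\Ek)$, yielding a Gronwall inequality independent of $\kappa$.

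Once $\sup_{t\in[0,T]}\Ek(t)$ is controlled on a $\kappa$-independent interval, weak-$*$ compactness together with Aubin--Lions extracts a limit $(\eta,v,q)$ solving the original Lagrangian system; pushing forward by $\eta$ recovers $(u^\pm,p^\pm,\Omega^\pm(t))$ with the stated regularity. For uniqueness I would run an energy estimate on the difference of two solutions sharing the same data at one derivative below the existence level; the quasilinear structure of $\sigma H(\eta)$ produces commutators that demand an extra half-derivative on the data and on the initial interface, which is precisely why the uniqueness statement requires $u_0^\pm \in H^{4.5}(\Omega^\pm)$ and $\Gamma\in H^{5.5}$.
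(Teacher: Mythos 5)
Your skeleton (Lagrangian reformulation, horizontal convolution by layers, higher-order energy with div--curl/vorticity recovery of interior norms, passage to the limit, difference estimate for uniqueness) is the right family of ideas, but there are concrete gaps. First, the formulation: you place each phase in its own Lagrangian coordinates with $\rho^\pm v^\pm_t+A^T\nabla q^\pm=0$ and impose ``continuity of $v\cdot n$ across $\Gamma$'' on the reference interface. Since the tangential velocity jumps, $\eta^+\neq\eta^-$ pointwise on $\Gamma$, so the jump conditions cannot be matched on the fixed $\Gamma$ in that formulation. The paper instead uses the smoothed extension of the \emph{plus-side} flow map as a single ALE map for both phases, which is exactly why the minus-phase momentum equation in (\ref{Eulerreg}) carries the relative transport term $(\ak)_j^\ell(v^{-j}-v_e^{-j})v^{i}_{,\ell}$; your system has no analogue of this term, yet it drives much of the actual work (the iteration between the $\Op$- and $\Om$-problems, the elliptic/transport splitting for $(q^-,v^-)$, and the $H^{-0.5}$ trace estimates of Section \ref{sec::Trace} for $v_{ttt}$). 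Second, the regularization: besides smoothing coefficients, the paper adds the artificial parabolic term $-\kappa\Delta_0(v^+\cdot n_\kappa)$ to the Laplace--Young condition (\ref{Eulerreg}d); the $\sqrt{\kappa}$-weighted norms it generates are built into $E_\kappa$ in (\ref{energy}) and are what absorb the convolution commutators, and it is also what makes the frozen-coefficient problem solvable at fixed $\kappa$ --- via the Tychonoff fixed-point theorem, since the solution map is only weakly continuous; the contraction in a high-order ball that you invoke is not available. Your proposal wavers on whether the surface-tension operator is smoothed at all and supplies no mechanism for closing the fixed point at fixed $\kappa$.

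The more serious gap is your claim that the $\kappa$-independent energy estimate directly controls $v^\pm$ in $H^3(\Omega^\pm)$ and $\Gamma(t)$ in $H^4$, i.e., that the scheme runs at the regularity stated in the theorem. In the ALE formulation the transported velocity in $\Om$ is only as regular as $\nabla\eta^+$, which is strictly less regular than $v^\pm$, so the curl estimates cannot be closed at the $H^{2.5}$ level (Remark \ref{transportrequirement}); for this reason the paper's existence scheme runs at the higher level $u_0^\pm\in H^{4.5}(\Omega^\pm)$ (after mollifying still smoother data), and only afterwards lowers the hypotheses to $u_0^\pm\in H^3(\Omega^\pm)$, $\Gamma\in H^4$ through a separate a priori estimate carried out in the \emph{Eulerian} frame (Section \ref{optimal}), where the transport velocity has full $H^3(\Omega^\pm(t))$ regularity. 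Your plan has no counterpart of this two-tier argument, so as written it does not reach the stated $H^3$/$H^4$ data. Relatedly, your heuristic that uniqueness costs ``an extra half-derivative'' is off: the gap between the existence and uniqueness hypotheses is $1.5$ derivatives, and the paper's uniqueness argument needs the higher a posteriori bounds precisely because the difference system (\ref{unique}) contains the extra boundary term $b=(v^{2+}-v^{2-})\cdot(n_2-n_1)$, which is absent in the one-phase problem of \cite{CoSh2007}.
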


The paper is organized as follows.  In Section \ref{sec::Notation}, we establish the
notation to be used throughout the paper.  In Section \ref{sec::Trace} we establish
low-regularity trace theorems of the normal and tangential components of $L^2$
vector fields with divergence and curl structure. In
Section \ref{linearkprob}, we introduce a regularized version of the Euler
equations (\ref{Euler});  the transport velocity and the domain are regularized using
the tool of horizontal convolution by layers that we introduced in \cite{CoSh2007}.
Additionally, a nonlinear parabolic regularization of the surface tension operator is
made in the Laplace-Young boundary condition (\ref{Eulerreg}d).
Section \ref{kprob} is devoted to the existence of solutions to (\ref{Eulerreg}).
In Section \ref{sec::time0}, we obtain estimates for the velocity, pressure, and their
time derivatives at time $t=0$.
Section \ref{sec::pressure} provides the pressure estimates that we need for a priori
estimates.
In Section \ref{kindpest}, we establish the $\kappa$-independent estimates for the
solutions of the $\kappa$-problem (\ref{Eulerreg}); this allows us to pass to the limit
as the regularization parameter $\kappa \to 0$ and prove existence of solutions to
(\ref{Euler}). In Section \ref{optimal}, we provide
the optimal regularity requirements on the data.  Finally, in Section \ref{sec::uniqueness}
we prove uniqueness of solutions.

\section{Notation} \label{sec::Notation}
Let ${\mathfrak n}:=\text{dim}(\Omega)=2$ or $3$.
We will use the notation $H^s(\Op)$ ($H^s(\Om)$) to denote either
$H^s(\Op;\bbR)$ ($H^s(\Om;\bbR)$) for a scalar function or
$H^s(\Op;\bbR^{\mathfrak n})$ ($H^s(\Om;\bbR^{\mathfrak n})$) for a
vector valued function, and we denote the 
$H^s(\Omega^\pm)$-norm by
\begin{align*}
\|w\|_{s,+} = \|w\|_{H^s(\Op)}\quad\text{and}\quad \|w\|_{s,-} =
\|w\|_{H^s(\Om)}\,.
\end{align*}
The $H^s(\Gpm)$- and $H^s(\Go)$-norms are denoted by
\begin{align*}
|w|_s = \|w\|_{H^s(\Gpm)}\quad\text{and}\quad |w|_{s,\Go} =
\|w\|_{H^s(\Go)}\,.
\end{align*}
For simplicity, we also use $\|w\|^2_{s,\pm}$ and $|w|^2_{s,\pm}$ to
denote $\|w^+\|^2_{s,+}+\|w^-\|^2_{s,-}$ and $|w^+|^2_s+|w^-|^2_s$,
respectively. That is,
\begin{align*}
\|w\|^2_{s,\pm} &= \|w^+\|^2_{s,+}+\|w^-\|^2_{s,-}\,, \\
|w|^2_{s,\pm} &= |w^+|^2_s+|w^-|^2_s\,.
\end{align*}
For $s\ge 1.5$, $\mV_+^s(T)$ denotes the space
\begin{align*}
\Big\{ w\in L^2(0,T;L^2(\Op))\ \Big|\ w\in L^2(0,T;H^s(\Op)\cap
L^\infty(0,T;H^{s-1.5}(\Op)) \Big\}
\end{align*}
with associated norm
\begin{align*}
\|w\|_{\mV_+^s(T)} = \sup_{t\in[0,T]} \|w(t)\|_{H^{s-1.5}(\Op)} +
\int_0^T \|w^+(s)\|_{H^s(\Op)}^2 ds\,,
\end{align*}
where $w$ can be either vector-valued or scalar-valued. The space
$\mV^s_-(T)$ is defined slightly differently, namely
\begin{align*}
\mV^s_-(T) \equiv \Big\{ w\in L^2(0,T;L^2(\Om))\ \Big|\ w\in
L^2(0,T;H^s(\Om)\cap L^\infty(0,T;H^{s-1}(\Om)) \Big\}
\end{align*}
with norm
\begin{align*}
\|w\|_{\mV_+^s(T)} = \sup_{t\in[0,T]} \|w(t)\|_{H^{s-1}(\Om)} +
\int_0^T \|w(s)\|_{H^s(\Om)}^2  ds\,.
\end{align*}
As in \cite{CoSh2007}, the energy function is defined as
\begin{align}
E_\kappa(t) =&\ \|\eta^+\|^2_{{\mathfrak n} + 2.5,+} +
\sum_{j=0}^{\mathfrak n} \|\partial_t^j v \|^2_{3.5-j,\pm} +
\|\partial_t^{{\mathfrak n} + 1}v\|^2_{0,\pm} +
\|\sqrt{\kappa}\eta^+\|^2_{{\mathfrak n} + 3.5,+} \label{energy} \\
& +\sum_{j=0}^{{\mathfrak n} + 1} \int_0^T
\|\sqrt{\kappa}\partial_t^j v^+\|^2_{4.5-j,+} dt \,. \nonumber
\end{align}

We use the notation $f^\pm = g^\pm + h^+ + k^-$ to
mean that 
\begin{align*}
f^+ = g^+ + h^+\,, \text{ and } f^- = g^- + k^- \,. 
\end{align*}

\section{Trace theorems} \label{sec::Trace}
The normal trace theorem which states that the existence of the
normal trace of a velocity field $w\in L^2(\Omega)$ relies on the
regularity of $\div u$ (see, for example, \cite{Temam}). If $\div w\in
H^1(\Omega)'$, then $w\cdot N$, the normal trace, exists in
$H^{-0.5}(\bdy\Omega)$ so that
\begin{align}
\|w\cdot N\|_{H^{-0.5}(\bdy\Omega)} \le C \Big[\|w\|^2_{L^2(\Omega)}
+ \|\div w\|^2_{H^1(\Omega)'}\Big] \label{normaltrace}
\end{align}
for some constant $C$ independent of $w$. In addition to the normal
trace theorem, we have the following.
\begin{theorem}
Let $w\in L^2(\Omega)$ so that $\curl w\in H^1(\Omega)'$, and let
$\tau_1$, $\tau_2$ be a basis of the vector field on $\bdy\Omega$,
i.e., any vector field $u$ can be uniquely written as $u^\alpha
\tau_\alpha$. Then
\begin{align}
\|w\cdot \tau_\alpha\|_{H^{-0.5}(\bdy\Omega)} \le C
\Big[\|w\|^2_{L^2(\Omega)} + \|\curl
w\|^2_{H^1(\Omega)'}\Big]\,,\qquad\alpha=1,2 \label{tangentialtrace}
\end{align}
for some constant $C$ independent of $w$.
\end{theorem}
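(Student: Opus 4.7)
The plan is to mimic the standard duality argument behind the normal trace inequality (\ref{normaltrace}), but with $\div$ and $w\cdot N$ replaced by $\curl$ and $w\cdot\tau_\alpha$. The starting identity will be
\begin{align*}
\int_\Omega (\curl w)\cdot V\,dx - \int_\Omega w\cdot \curl V\,dx = \int_\Go w\cdot(V\times N)\,dS,
\end{align*}
valid for smooth $w$ and $V$ (it is simply $\int_\Omega\div(w\times V)\,dx$ followed by the cyclic property of the triple product on $\Go$). I will choose $V$ so that $V\times N=\phi\,\tau_\alpha$ on $\Go$ for a prescribed test function $\phi\in H^{0.5}(\Go)$; the boundary integral then becomes the $H^{-0.5}$--$H^{0.5}$ pairing $\int_\Go(w\cdot\tau_\alpha)\phi\,dS$, and (\ref{tangentialtrace}) reduces to a bulk estimate for $V$.

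To construct the extension, I first note that since $\{\tau_1,\tau_2,N\}$ forms an orthonormal frame one has $\tau_\alpha\times N=\pm\tau_\beta$ for the complementary index $\beta$, so the tangential field $\phi\,\tau_\alpha$ is realized as $V|_\Go\times N$ by taking $V|_\Go=\mp\phi\,\tau_\beta$. Provided $\Go$ is regular enough for $\tau_\beta$ to act as a multiplier on $H^{0.5}(\Go)$, this boundary datum lies in $H^{0.5}(\Go)$ with norm controlled by $|\phi|_{H^{0.5}(\Go)}$, and the inverse trace theorem then yields $V\in H^1(\Omega)$ with $\|V\|_{H^1(\Omega)}\le C\,|\phi|_{H^{0.5}(\Go)}$. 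Inserting this $V$ into the identity and invoking the duality definition of the $H^1(\Omega)'$-norm produces
\begin{align*}
\Big|\int_\Go(w\cdot\tau_\alpha)\phi\,dS\Big|
&\le \|\curl w\|_{H^1(\Omega)'}\|V\|_{H^1(\Omega)}+\|w\|_{L^2(\Omega)}\|\curl V\|_{L^2(\Omega)}\\
&\le C\bigl(\|w\|_{L^2(\Omega)}+\|\curl w\|_{H^1(\Omega)'}\bigr)\,|\phi|_{H^{0.5}(\Go)},
\end{align*}
which gives (\ref{tangentialtrace}) for smooth $w$ by taking the supremum over $\phi$. A density argument, approximating $w$ by smooth fields in the graph norm $\|\cdot\|_{L^2(\Omega)}+\|\curl\cdot\|_{H^1(\Omega)'}$, then transfers both the definition of $w\cdot\tau_\alpha\in H^{-0.5}(\Go)$ and the estimate to arbitrary $w$ in the stated class.

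The only delicate point I anticipate is the construction of $V$, which requires a tangent frame on $\Go$ of sufficient smoothness together with a linear inverse-trace bound for tangential vector fields. A global smooth frame may fail to exist on a closed surface (hairy-ball obstruction), but a partition-of-unity reduction to boundary charts combined with a straightening of $\Go$ to the half-space model in which $\tau_1=e_1$, $\tau_2=e_2$, $N=-e_3$ sidesteps this issue; the remainder of the argument is routine bookkeeping.
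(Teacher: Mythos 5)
Your argument is correct and is essentially the paper's own proof: the same identity $\int_\Omega \curl w\cdot V\,dx-\int_\Omega w\cdot\curl V\,dx=\int_{\bdy\Omega} w\cdot(V\times N)\,dS$ with boundary datum chosen so that $V\times N=\phi\,\tau_\alpha$ (the paper writes this as $\phi_\alpha=(N\times\tau_\alpha)\psi$ on $\bdy\Omega$), followed by an $H^1$-bounded extension and the $H^{0.5}$--$H^{-0.5}$ duality. The only cosmetic difference is that the paper extends by solving $\Delta\phi_\alpha=0$ rather than invoking the inverse trace theorem, and your added remarks on density and local frames merely spell out details the paper leaves implicit.
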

\begin{proof} Given $\psi\in H^{0.5}(\bdy\Omega)$, let
$\phi_\alpha\in H^1(\Omega)$ be defined by
\begin{alignat*}{2}
\Delta\phi_\alpha &= 0 &&\quad\text{in $\Omega$}\,,\\
\phi_\alpha &= (N\times \tau_\alpha)\psi &&\quad\text{on
$\bdy\Omega$}\,.
\end{alignat*}
Then
\begin{align*}
\int_{\bdy\Omega} (w\cdot \tau_\alpha) \psi dS = \int_\Omega \curl w
\cdot \phi_\alpha dx - \int_\Omega \curl\phi_\alpha\cdot w dx
\end{align*}
and hence
\begin{align*}
\Big|\int_{\bdy\Omega} (w\cdot\tau_\alpha)\psi dS\Big| &\le C
\Big[\|w\|^2_{L^2(\Omega)} + \|\curl w\|_{H^1(\Omega)'}\Big]
\|\phi_\alpha\|_{H^1(\Omega)} \\
&\le C \Big[\|w\|^2_{L^2(\Omega)} + \|\curl w\|_{H^1(\Omega)'}\Big]
\|\psi\|_{H^{0.5}(\bdy\Omega)}
\end{align*}
which implies the desired inequality.
\end{proof}
Combining (\ref{normaltrace}) and (\ref{tangentialtrace}), we have
the following:
\begin{align}
\|w\|_{H^{-0.5}(\Go)} \le C\Big[\|w\|_{L^2(\Omega)} + \|\div
w\|_{H^1(\Omega)'} + \|\curl w\|_{H^1(\Omega)'}\Big]
\label{tracetemp}
\end{align}
for some constant $C$ independent of $w$.

\section{The regularized $\kappa$-problem}\label{linearkprob} 
Let $\Omega'$ be an open subset of $\Omega$ so that
$\Op\subset\subset \Omega' \subset\subset \Omega$. In the following
discussion, we will use $M^+ : \solnpspace\to\solnspace$ to denote a
fixed bounded extension operator (from the plus region to the whole
region) so that $M^+ v = 0$ in $\Omega'^c$ for all $v\in
\solnpspace$.

Let $v^+$ be the Lagrangian velocity in the plus region $\Op$, and
$v_e = M^+ v^+$ with $v_\kappa$ defined as the horizontal
convolution by layers of $v_e$. Let $\eta_\kappa = \id + \int_0^t
v_\kappa(s) ds$ be the Lagrangian coordinate (or flow map) of
$v_\kappa$, and the Jacobian $\mJ_\kappa$, the cofactor matrices
$a_\kappa$ and the normal $n_\kappa$ are defined accordingly.

The smoothed $\kappa$-problems is then defined as 
\begin{subequations}\label{Eulerreg}
\begin{alignat}{2}
\eta_e = \id + \int_0^t & v_e(s) ds &&\quad\text{in \ $[0,T]\times\Omega^\pm$}\,,\\
\rho^\pm \mJ_\kappa v^{\pm i}_t + (a_\kappa)_j^\ell (v^{-j} - v_e^{-
j}) v_{,\ell}^{\pm i} + (a_\kappa)^j_i q^\pm_{,j} &= 0
&&\quad\text{in \ $[0,T]\times\Omega^\pm$}\,,\\
(a_\kappa)^j_i v^{\pm i}_{,j} &= 0 &&\quad\text{in \ $[0,T]\times\Omega^\pm$}\,,\\
q^+ - q^- = - \sigma \Delta_g(\eta_e)\cdot n_\kappa - \kappa
&\Delta_0 (v^+ \cdot n_\kappa) &&\quad\text{on \ $[0,T]\times\Gpm$}\,,\\
v^+ \cdot n_\kappa &= v^- \cdot n_\kappa &&\quad\text{on \ $[0,T]\times\Gpm$}\,,\\
v^- \cdot n_\kappa &= 0 &&\quad\text{on \ $[0,T]\times\Go$}\,,\\
v^\pm(0) &= u^\pm_0 &&\quad\text{in \ $\{t=0\}\times\Omega^\pm$}\,,
\end{alignat}
\end{subequations}
where $g_\kappa = \eta_{\kappa,1}\cdot \eta_{\kappa,2}$ is the
induced metric on $\Gpm$. 
Note that since $M^+$ extends $v^+$ continuously to the whole domain
$\Omega$, $\eta_\kappa^+ = \eta_\kappa^-$ and $\eta_e^+ = \eta_e^-$
on $\Gpm$.

\begin{remark} \label{bcrmk}
Since $M^+ v = 0$ in $\Omega'^c$ for all $v\in\solnspace$, $\ba =
\id$ and $\nk = N$ on $\Go$. Therefore, the boundary condition
(\ref{Eulerreg}f) can also be written as $v^-\cdot N = 0$ where $N$
denote the outward pointing unit normal of $\Om$ on $\Go$.
\end{remark}

\section{Existence of solutions for the regularized
$\kappa$-problem} \label{kprob}
\subsection{The iteration between the solution in $\Op$ and $\Om$}
\label{extension} Let $(\bv^+, \bv^- \bq)\in \mV_+^{5.5}(T)
\times\mV_-^{4.5}(T) \times \mV_-^{3.5}(T)/\bbR$ be given, and let
$\bvk^+$ be the horizontal convolution by layers of $\bv^+$. Define
$\bv_e = M^+ \bvk^+$, the extension of $\bvk^+$, with the associated
Lagrangian map $\bek = \id + \int_0^t \bv_e(s) ds$ and cofactor
matrix $\ba = \bJ (\nabla\bek)^{-1}$ where $\bJ = \det(\nabla\bek)$
is the Jacobian. The normal vector $\bn$ is then defined by
\begin{align*}
\bn^i = \bg^{-\frac{1}{2}} \varepsilon_{ijk} \be^j_{\kappa,1}
\be^k_{\kappa,2} = \bg^{-\frac{1}{2}} (\ba)_i^j N_j\,.
\end{align*}

The process of finding solutions to (\ref{Eulerreg}) consists of
finding solutions to the following two problems. First, in the plus
region $\Op$, we solve
\begin{subequations}\label{Eulerplus}
\begin{alignat}{2}
\rho^+ \bJ w^i_t + (\ba)_i^j r_{,j} &= 0 && \quad \text{in \ \ $[0,T]\times\Op$}\,, \\
(\ba)^j_i w^i_{,j} &= 0 && \quad \text{in \ \ $[0,T]\times\Op$}\,, \\
r = \bq
- \sigma L_\bg(\be)\cdot\bn - \kappa&\Delta_{\bar{0}}
(w\cdot\bn) && \quad\text{on \ \ $[0,T]\times\Gpm$}\,,\\
w(0) &= u_0^+ && \quad \text{on \ \ $\{t=0\}\times\Omega$}\,,
\end{alignat}
\end{subequations}
where $w = u^+\circ\bek$, $r = p^+\circ\bek$, $\be = \id + \int_0^t
\bv(s)ds$ and $\Delta_{\bar{0}} = \bg^{-\frac{1}{2}}
\partial_\alpha[\sqrt{g_0} g_0^{\alpha\beta} \partial_\beta]$. Once
the solution $(w,r)$ to (\ref{Eulerplus}) is
obtained, 
then in the minus region $\Om$, we solve
\begin{subequations}\label{Eulerminus}
\begin{alignat}{2}
\rho^- \Big[\bJ v^i_t + (\ba)_j^\ell(\bv^- - \bv_e^-) v^i_{,\ell}
\Big]
+ (\ba)_i^j q_{,j} &= 0 && \quad \text{in \ \ $[0,T]\times\Om$}\,, \\
(\ba)^j_i v^i_{,j} &= 0 && \quad \text{in \ \ $[0,T]\times\Om$}\,, \\
v\cdot\bn &= w \cdot\bn
&& \quad \text{on \ \ $[0,T]\times\Gpm$}\,, \\
v\cdot\bn &= 0 && \quad \text{on \ \ $[0,T]\times\Go$}\,, \\
v(0) &= u_0^- && \quad \text{on \ \ $\{t=0\}\times\Omega$}\,,
\end{alignat}
\end{subequations}
where $v = u^-\circ\bek$, $q = p^-\circ\bek$.

This process introduces the map $\Phi:(\bv^+,\bv^-,\bq)\mapsto
(w,v,q)$, and the fixed-points of $\Phi$ provides solutions to
problem (\ref{Eulerreg}).

\subsection{Estimates for the solution in $\Op$} The
only difference between (\ref{Eulerplus}) and the one phase problem
studied in \cite{CoSh2007} is the presence of the term
$\bq$ 
in the boundary condition (\ref{Eulerplus}c). We note that if $\bq$
is smooth, then by exactly the same argument as in \cite{CoSh2007}, the
solution to (\ref{Eulerplus}) will be also be smooth, depending on
the regularity of the initial velocity $u_0^+$. Therefore, for $\bq$
given in $L^2(0,T;H^{3.5}(\Omega^-)/\bbR)$, we replace
(\ref{Eulerplus}c) by
\begin{align}
r = \bq_\epsilon
- \sigma L_\bg(\be)\cdot\bn - \kappa\Delta_{\bar{0}} (w\cdot\bn) \
\quad \text{on \ \ $\Gpm$}\,, \label{modifiedbc}
\end{align}
where $\bq_\epsilon$ denotes the horizontal convolution by layers of
$\bq$. The solution $w^\epsilon$ and $r^\epsilon$ to
(\ref{Eulerplus}a), (\ref{Eulerplus}b), (\ref{modifiedbc}) and
(\ref{Eulerplus}d) are smooth functions satisfying
\begin{align}
\|w^\epsilon\|^2_{0,+} + 
\int_0^t \Big[\|r^\epsilon\|^2_{3.5,+} + \kappa
|w^\epsilon\cdot\bn|^2_{5,\pm}\Big] ds &\le N(u_0) +
C(\kappa,\bv^+,\bq)\sqrt{t} \,, \label{energyest}
\end{align}
where $C(\kappa,\bv^+,\bq)$ denotes a constant depending on
$\kappa$, 
$\|\bv^+\|_{\mV_+^{5.5}(T)}$,
$\|\bq\|_{\mV_-^{3.5}(T)}$. Note that although this constant depends
on $\rho^+$ as well, we omit this dependence in the estimate since
it is a constant.

The divergence and curl estimates as in \cite{CoSh2007} can also be
carried on so that
\begin{align}
\|\curl w^\epsilon\|^2_{4.5,+} + \|\div w^\epsilon\|^2_{4.5,+} \le
N(u_0) + C(\kappa,\bv^+) \int_0^t \|w^\epsilon\|^2_{5.5,+} ds
\label{divcurl}
\end{align}
for some constant $C(\kappa,\bv^+)$ independent of the smooth
parameter $\epsilon$. Estimates (\ref{divcurl}) and
(\ref{energyest}) imply that
\begin{align*}
\int_0^t \|w^\epsilon(s)\|^2_{5.5,+} ds \le \frac{t}{\kappa} N(u_0)
+ C(\kappa,\bv^+) \int_0^t \int_0^s \|w^\epsilon(s')\|^2_{5.5,+} ds'
ds
\end{align*}
and the Gronwall inequality implies that
\begin{align}
\int_0^t \|w^\epsilon(s)\|^2_{5.5,+} ds \le
C(\kappa,u_0^+,\bv^+,\bq)\sqrt{t} \,.\label{epsilonest1}
\end{align}
By studying the elliptic problem for $r^\epsilon$ with the Dirichlet
boundary condition (\ref{modifiedbc}), we find that
\begin{align}
\int_0^t \|r^\epsilon(s)\|^2_{3.5,+} ds &\le
C(\kappa,u_0^+,\bv^+,\bq)\sqrt{t} \,. \label{epsilonest2}
\end{align}
(\ref{epsilonest2}) implies that $w^\epsilon_t\in
L^2(0,T;H^{2.5}(\Op))$ and by interpolations,
\begin{align}
\sup_{t\in[0,T]} \|w^\epsilon(t)\|^2_{4,+} \le \|u_0^+\|^2_{4,+} +
C(\kappa,u_0^+,\bv^+,\bq)\sqrt{T} \,. \label{epsilonest3}
\end{align}
(\ref{epsilonest3}) further implies that
\begin{align}
\|r^\epsilon\|^2_{2,+} \le N(u_0) +
C(\kappa,\delta,u_0^+,\bv^+,\bq)\sqrt{t} + \delta\|\bq\|^2_{2,-}\,.
\label{rest}
\end{align}
It also follows from (\ref{Eulerplus}a) that
$\|w^\epsilon_t\|^2_{\mV^{2.5}_+(T)}$ shares the same bound as
$\|r^\epsilon\|^2_{\mV^{3.5}_-(T)}$, i.e.,
\begin{align}
\|w^\epsilon_t\|^2_{\mV^{2.5}_+(T)} \le N(u_0) +
C(\kappa,\delta,u_0^+,\bv^+,\bq)\sqrt{t} + \delta\|\bq\|^2_{2,-}\,.
\label{wtest}
\end{align}
These $\epsilon$ independent estimates enable us to pass
$\epsilon\to 0$ and obtained solution $(w,r)$ to problem
(\ref{Eulerplus}) with estimate
\begin{align}
\|w\|^2_{\mV^{5.5}_+(T)} + \|w_t\|^2_{\mV^{2.5}_+(T)} +
\|r\|^2_{\mV^{3.5}_+(T)} \le N(u_0) + C_{\kappa,\delta} \sqrt{T} +
\delta\|\bq\|^2_{2,-} \,, \label{mainestimate1}
\end{align}
where $C_{\kappa,\delta}$ is the short hand notation for
$C(\kappa,\delta,u_0^+,\bv^+,\bq)$.

\subsection{Estimates for the solution in $\Om$} \label{Eulerm}
We will set up a iterative scheme in order to show the existence of
a solution to problem (\ref{Eulerminus}). Let $\bA^i_j = \bJ^{-1}
(\ba)^i_j$. For a given $\bw\in \mV^{4.5}_-(T)$ with
$\bw_t\in\mV^{2.5}_-(T)$, we solve first
\begin{subequations}\label{qminuseq}
\begin{alignat}{2}
\bA^k_i [\bA^j_i q_{,j}]_{,k} =& - \rho^- \bA^k_r \bv_{\kappa,s}^r
\bA^s_i \bw^i_{,k} - \rho^- \bA^k_i [\bA^\ell_j(\bv^{-j} -
\bv_e^{-j})
\bw_{,\ell}^i]_{,k} &&\text{ \ in \ $\Om$}\,,\\
\bA^j_i q_{,j} \bn^i = - \rho^-&[w_t\cdot \bn + (w - \bw)\cdot
\bar{n}_{\kappa t} + \bA^\ell_j (\bv^{-j} - \bv_e^{-j})
\bw^i_{,\ell} \bn^i] &&\text{ \ on \ $\Gpm$}\,,\\
\bA^j_i q_{,j} \bn^i = \rho^-&[ \bw \cdot \bar{n}_{\kappa t} -
\bA^\ell_j (\bvk^{-j} - \bv_e^{-j}) \bw^i_{,\ell} \bn^i] &&\text{ \
on \ $\Go$}\,.
\end{alignat}
\end{subequations}
Once a solution to (\ref{qminuseq}) is obtained, use this solution
$q$ in (\ref{Eulerminus}a) and solve the transport equation
\begin{align*}
\rho^- \Big[\bJ v^i_t + (\ba)_j^\ell(\bv^{-j} - \bv_e^{-j})
v^i_{,\ell} \Big]
+ (\ba)_i^j q_{,j} &= 0 && \quad \text{in \ \ $\Om$}\,, \\
v(0) &= u_0^- && \quad \text{in \ \ $\Om$}\,.
\end{align*}
Suppose we can prove that $v$ is actually in the space we start
with, then a fixed-point of the map $\Psi:\bw\mapsto v$ provides a
solution to problem (\ref{Eulerminus}).

We note that in this iterative scheme $\bA$ is always fixed with
estimates
\begin{align}
\|\id - \bA(t)\|_{4.5,-} \le C(\bv^+) \sqrt{t} \label{Aest}
\end{align}
for some constant $C$ depending on $\|\bv^+\|^2_{\mV^{5.5}_+(T)}$
but independent of $\kappa$. Therefore, by assuming that $T$ is
small enough (so that $C(\bv^+)T$ is small), it follows from
elliptic theory (see \cite{Eb2002}) that
\begin{subequations}
\begin{align*}
\|q\|^2_{3.5,-} &\le C\Big[\|\bw\|^2_{3.5,-} + \|w_t\|^2_{2.5,+} +
\|w\|^2_{2.5,+}\Big]\,,\\
\|q\|^2_{2.5,-} &\le C\Big[\|\bw\|^2_{2.5,-} + \|w_t\|^2_{1.5,+} +
\|w\|^2_{1.5,+}\Big]\,.
\end{align*}
\end{subequations}
Combining these two estimates and (\ref{mainestimate1}), by
interpolations we find that
\begin{align}
\|q\|^2_{\mV^{3.5}_-(T)} \le N(u_0) + C_{\kappa,\delta} \sqrt{T} +
\delta\|\bq\|^2_{2,-} + C T \Big[\|\bw\|^2_{\mV^{4.5}_-(T)} +
\|\bw_t\|^2_{\mV^{2.5}_-(T)}\Big] \,. \label{qminusest}
\end{align}
For the regularity of $v$, we mimic the divergence and curl
estimates as in \cite{CoSh2007}. In $\Om$,
\begin{align}
(\varepsilon_{ijk} \bA_j^\ell v_{,\ell}^{-k})_t + \bA_r^s (\bv^{-r}
- \bv_e^{-r}) (\varepsilon_{ijk} \bA_j^\ell v_{,\ell}^{-k})_{,s} =
B^i(v) \label{voreq}
\end{align}
where
\begin{align*}
B^i(v) &= \varepsilon_{ijk} \Big[\bA^\ell_r \bv^{-r}_{\kappa,s}
\bA^s_j v_{,\ell}^{-k} + \bA_r^s(\bv^{-r} - \bv_e^{-r})
\bA_{j,s}^\ell v_{,\ell}^{-k} - \bA_j^\ell [\bA_r^s(\bv^{-r} -
\bv_e^{-r})]_{,\ell} v_{,s}^{-k}\Big] \\
&= \varepsilon_{ijk} \Big[\bA^\ell_r \bv^{-r}_{\kappa,s} \bA^s_j
v_{,\ell}^{-k} - \bA_j^\ell \bA_r^s(\bv_{,\ell}^{-r} -
\bv_{e,\ell}^{-r}) v_{,s}^{-k}\Big]\,,
\end{align*}
a function of $\nabla v$, $\nabla \bv$ and $\nabla \be$, where we
use the identity $\bA^s_r \bA_{j,s}^\ell = \bA^s_j \bA_{r,s}^\ell$.
Let $\bar{\zeta}$ be the solution to
\begin{align*}
\bar{\zeta}_t^i = [\bA_j^i (\bv^{-j} -
\bv_e^{-j})]\circ\bar{\zeta}\,,
\end{align*}
i.e., $\bar{\zeta}$ is the flow map of the velocity field $\bA^T
(\bv - \bv_e)$, then
\begin{align}
\varepsilon_{ijk} \bA_j^\ell v_{,\ell}^{-k} = \Big[\curl u_0 +
\int_0^\cdot B^i(v)\circ\bar{\zeta} ds\Big]\circ\bar{\zeta}^{-1}\,.
\label{vorticitytemp}
\end{align}
Since
\begin{align*}
\Big[\int_0^t K(\bar{\zeta}(y,s),s)ds\Big]\circ\bar{\zeta}^{-1}(x,t)
= \int_0^t K(\bar{\zeta}(x,s-t),s)ds\,,
\end{align*}
(\ref{vorticitytemp}) implies
\begin{align}
\varepsilon_{ijk} \bA_j^\ell v_{,\ell}^{-k}(x,t) = (\curl
u_0)\circ\bar{\zeta}^{-1}(x,t) + \int_0^t
B^i(v)\circ\bar{\zeta}(x,s-t) ds\,. \label{vorticity}
\end{align}
We use (\ref{vorticity}) as the fundamental equality to proceed to
vorticity estimates in $\Om$. Since $\|\bar{\zeta}(t)\|^2_{4.5,-}
\le M_0 + C T \|\bv^-\|^2_{\mV^{4.5}_-(T)} \equiv C(\bv^-)$,
(\ref{vorticity}) implies that
\begin{align}
\|\curl_{\bek} v\|^2_{3.5,-} \le C(\bv^-) \Big[N(u_0) + \int_0^t
\|v\|^2_{4.5,-} ds\Big]\,. \label{curlvm}
\end{align}
Transforming back to the domain $\bek(\Om)$, we find that
\begin{align*}
\|\curl u\|^2_{H^{3.5}(\bek(\Om))} \le C(\bv^-)\Big[N(u_0) +
\int_0^t
\|u\|^2_{H^{3.5}(\bek(\Om))} ds\Big]\,. 
\end{align*}
We remark here that the restriction of obtaining higher regularity
is mainly due to the presence of $\nabla\bA$ in $B(v)$ that comes
from the transport term. Boundary conditions (\ref{Eulerminus}c) and
(\ref{Eulerminus}d) imply
\begin{align*}
\|u\cdot N\|^2_{H^4(\partial\bek(\Om))} = \|u\cdot
N\|^2_{H^4(\bek(\Gpm))} + \|u\cdot N\|^2_{H^4(\bek(\Go))} &\le
C\|w\|^2_{4.5,+} \,. 
\end{align*}
These two estimates and the divergence free constraint $\div u = 0$
lead to
\begin{align*}
X(T) \le C \|w\|^2_{\mV^{4.5}_+(T)} + C(\bv^-) \Big[T N(u_0) +
\int_0^T X(t) dt\Big]\,,
\end{align*}
where $\displaystyle{X(T) = \int_0^T \|u\|^2_{H^{4.5}(\bek(\Om))}
dt}$. Therefore, the Gronwall inequality implies
\begin{align*}
\int_0^t \|u\|^2_{H^{4.5}(\bek(\Om))} ds \le [1+C(\bv^-)T]N(u_0) +
C_{\kappa,\delta}\sqrt{T} + \delta \|\bq\|^2_{2,-}
\end{align*}
or equivalently,
\begin{align*}
\int_0^T \|v\|^2_{4.5,-} dt \le [1+C(\bv^-)T]N(u_0) +
C_{\kappa,\delta}\sqrt{T} + \delta \|\bq\|^2_{2,-}\,.
\end{align*}
For $T$ even smaller (so that $C(\bv^-)T$ is small), it follows from
(\ref{Eulerminus}a) that
\begin{align*}
& \int_0^T \|v_t\|^2_{2.5,-} dt \le C \int_0^T \Big[\|v\|^2_{3.5,-}
+ \|q\|^2_{3.5,-}\Big] ds \\
\le&\ N(u_0) + C_{\kappa,\delta}\sqrt{T} + \delta \|\bq\|^2_{2,-} +
C T\Big[\|\bw\|^2_{\mV^{4.5}_-(T)} +
\|\bw_t\|^2_{\mV^{2.5}_-(T)}\Big]\,.
\end{align*}
Therefore,
\begin{align}
&\|v\|^2_{\mV^{4.5}_-(T)} + \|v_t\|^2_{\mV^{2.5}_-(T)} +
\|q\|^2_{\mV^{3.5}_-(T)} \nonumber\\
\le&\ N(u_0) + C_{\kappa,\delta}\sqrt{T} + \delta \|\bq\|^2_{2,-} +
C T\Big[\|\bw\|^2_{\mV^{3.5}_-(T)} +
\|\bw_t\|^2_{\mV^{2.5}_-(T)}\Big]\,. \label{mainestimate2temp}
\end{align}
In the following sections, we will always assume that the initial
input $\bq$ satisfies $\|\bq\|^2_{\mV^{3.5}_-(T)} \le N(u_0)+1$. We
can choose a fixed but positive $\displaystyle{\delta< \frac{1}{2(
N(u_0)+1)}}$, and let $L$ be the collection of those elements $v\in
L^2(0,T;H^{4.5}(\Om))$ so that
\begin{align*}
\|v\|^2_{\mV^{4.5}_-(T)} + \|v_t\|^2_{\mV^{2.5}_-(T)} \le N(u_0) +
1\,.
\end{align*}
For a fixed $\kappa>0$, we choose $T$ small enough so that
\begin{align*}
C_{\kappa,\delta} \sqrt{T} + C T \Big[N(u_0) + 1\Big] \le
\frac{1}{2}\,.
\end{align*}
Clearly the map $\Psi$ maps from $L$ into $L$. Similar to the proof
in Section \ref{weakcont}, $\Psi$ can be shown to be weakly
continuous in $L^2(0,T_\kappa;H^{5.5}(\Om))$. Since $L$ is a closed
convex set in $L^2(0,T;H^{4.5}(\Om))$), by Tychonoff fixed-point
theorem, there is a fixed-point $v$ of the map $\Psi$ which provides
a solution to (\ref{Eulerminus}). Uniqueness follows from the fact
that (\ref{Eulerminus}) is linear.

\begin{remark}
It follows from (\ref{mainestimate2temp}) that
\begin{align}
\|v\|^2_{\mV^{4.5}_-(T)} + \|v_t\|^2_{\mV^{2.5}_-(T)} +
\|q\|^2_{\mV^{3.5}_-(T)} \le N(u_0) + 1 \,. \label{mainestimate2}
\end{align}
\end{remark}

\subsection{Weak continuity of the map $\Phi$}\label{weakcont} Let
$(\bv_m^\pm,\bq_m)$ converges weakly to $(\bv^\pm,\bq)$ in the space
$L^2(0,T;H^{5.5}(\Omega^\pm)) \times
L^2(0,T;H^{3.5}(\Omega^-)/\bbR)$, $\Phi(\bv_m^+,\bv_m^-,\bq_m) =
(w_m,v_m,q_m)$ and $\Phi(\bv^+,\bv^-,\bq) = (w,v,q)$.
Suppose that $\mJ_{\kappa m}^\pm$, $\bar{a}_{\kappa m}^\pm$,
$\bar{n}_{\kappa m}^\pm$ are constructed from $\bvk^\pm$
accordingly. By the property of convolution by layers and the weak
convergence, we have $(\mJ_{\kappa m}^\pm, \bar{a}_{\kappa m}^\pm,
\bar{n}_{\kappa m}^\pm)$ converges to $(\bJ^\pm, \ba^\pm, \bn^\pm)$
strongly in $[L^\infty(0,T;H^{4.5}(\Omega^\pm))]^3$. Since
$(w_m,r_m)$ satisfies
\begin{align*}
& \int_\Op \rho^+ \mJ_{\kappa m}^+ w_{m t}^i \varphi^i dx - \int_\Op
r_m (\bar{a}_{\kappa m}^+)_i^j \varphi_{,j}^i dx + \kappa \int_\Gpm
g_0^{\alpha\beta} (w_m\cdot\bar{n}_{\kappa m}^+)_{,\alpha} (\varphi\cdot\bar{n}_{\kappa m}^+)_{,\beta} dS \\
=&\ \sigma \int_\Gpm \Big[\bq_m + L_{\bar{g}_{\kappa
m}}(\be_m^+)\cdot\bar{n}_{\kappa m}^+\Big] (\bar{a}_{\kappa
m}^+)_i^j N_j \varphi^i dS \qquad\forall \varphi\in
H^{\frac{3}{2}}(\Op)\,, \nonumber
\end{align*}
and $(w_m,r_m)$ are uniformly bounded in
$L^2(0,T;H^{5.5}(\Op))\times L^2(0,T;H^{3.5}(\Op))$, it follows that
there exists $(\tilde{w},\tilde{r})$ so that
\begin{align*}
& \int_\Op \rho^+ \bJ^+ \tilde{w}_t^i \varphi^i dx - \int_\Op
\tilde{r} (\ba^+)_i^j \varphi_{,j}^i dx + \kappa \int_\Gpm
g_0^{\alpha\beta} (\tilde{w}\cdot\bn^+)_{,\alpha} (\varphi\cdot\bn^+)_{,\beta} dS \\
=&\ \sigma \int_\Gpm \Big[\bq + L_{\bg}(\be^+)\cdot\bn^+\Big]
(\ba^+)_i^j N_j \varphi^i dS \qquad\forall \varphi\in
H^{\frac{3}{2}}(\Op)\,. \nonumber
\end{align*}
By the uniqueness of the solution to the linearized problem,
$\tilde{w} = w$. Similar argument shows that the solution
$(v_m,q_m)$ to problem (\ref{Eulerminus}) with all the fixed
coefficients constructed from $\bv_m$ converges weakly to $(v,q)$,
the solution to problem (\ref{Eulerminus}). Therefore, the weak
continuity of the map $\Phi$ is established. 

\subsection{The fixed-point argument} The only thing we need to
check is that if there is $T>0$ and a closed convex set $K\subseteq
L^2(0,T;H^{5.5}(\Op))\times L^2(0,T;H^{4.5}(\Om)/\bbR)\times
L^2(0,T;H^{3.5}(\Om))$ so that $\Phi$ maps from $K$ into $K$. Let
$K$ be defined as the collection of those elements $(w^\pm,q)\in
L^2(0,T;H^{5.5}(\Op))\times L^2(0,T;H^{4.5}(\Om))\times
L^2(0,T;H^{3.5}(\Om)/\bbR)$ so that
\begin{align*}
\|w^+\|^2_{\mV^{5.5}_+(T)} + \|w_t^+\|^2_{\mV^{2.5}_+(T)} &\le
N(u_0)+ 1\,, \\
\|w^-\|^2_{\mV^{4.5}_-(T)} + \|w_t^-\|^2_{\mV^{2.5}_-(T)} +
\|q\|^2_{\mV^{3.5}_-(T)} &\le N(u_0) + 1\,.
\end{align*}
Recall that $\delta$ is fixed from the previous section. Similar to
the proof in the previous section, we choose $T>0$ small enough so
that
\begin{align*}
C_{\kappa,\delta}\sqrt{T} + T(N(u_0)+1) \le \frac{1}{2}\,.
\end{align*}
Then by estimates (\ref{mainestimate1}) and (\ref{mainestimate2}),
the map $\Phi$ indeed maps from $K$ into $K$. Therefore, the
Tychonoff fixed-point theorem implies the existence of a fixed-point
$(v,q)$ of $\Phi$.

\begin{remark}
This $T$ is $\kappa$-dependent.
\end{remark}

\begin{remark} Once a solution to problem (\ref{Eulerreg}) is
obtained, without loss of generality, we may assume that the
pressure and its time derivatives satisfy the
Poincar$\acute{\text{e}}$ inequality (\ref{Poincare}): let
$\displaystyle{\bar{q} = \frac{1}{|\Omega|} \Big(\int_\Op q^+ dx +
\int_\Om q^- dx\Big)}$. Since $q^+$ and $q^-$ is uniquely determined
up to the addition of a constant (constant in space), we can replace
$q^+$ and $q^-$ by $q^+ - \bar{q} (\equiv Q^+)$ and $q^- - \bar{q}
(\equiv Q^-)$
\begin{align}
\|Q\|^2_{0,\pm}  = \|Q\|^2_0 \le C \|\nabla Q \|^2_0 = C \|\nabla
Q\|^2_{0,\pm} \,. \label{Poincare}
\end{align}
\end{remark}

\subsection{Estimates of the divergence and curl of the velocity field}
\subsubsection{Divergence and curl estimates} In $\Op$, we can apply
exactly the same technique as in \cite{CoSh2007} to conclude the
following lemma.
\begin{lemma}[Divergence and curl estimates in $\Op$] \label{lemma1}
Let $L_1=\curl$ and $L_2=\div$, and let $\eta_0:=\eta(0)$ and
$$
M_0^+:= P(\|u_0^+\|_{2.5 + {\mathfrak n},+}, |\Gpm| _{3.5 +
{\mathfrak n}}, \sqrt{\kappa} \|u_0^+\|_{1.5+3{\mathfrak
n},+},\sqrt{\kappa} |\Gamma|_{1+3{\mathfrak n}})
$$
denote a polynomial function of its arguments. Then for $j=1,2$,
\begin{equation}\label{divcurlp}
\begin{array}{l}
\ \ \displaystyle{\sup_{t\in[0,T]} \|\sqrt{\kappa} L_j
\eta^+(t)\|^2_{2.5+{\mathfrak n},+} + \sum_{k=0}^{{\mathfrak n}+1}
\sup_{t\in[0,T]} \|L_j
\partial_t^k\eta^+(t)\|^2_{1.5+{\mathfrak n}-k,+} } \vspace{0.2cm}\\
\displaystyle{ + \sum_{k=0}^{{\mathfrak n}+1} \int_0^T
\|\sqrt{\kappa} L_j \partial_t^k v^+\|^2_{2.5+{\mathfrak n}-k,+} dt
\le M_0^+ + CT \mP(\sup_{t\in[0,T]} \Ek(t)) } \,.
\end{array}
\end{equation}
\end{lemma}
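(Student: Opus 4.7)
The plan is to follow the one-phase divergence-curl analysis of \cite{CoSh2007} essentially verbatim. As explained in Section \ref{extension}, the restriction of (\ref{Eulerreg}) to $\Op$ coincides with the one-phase system (\ref{Eulerplus}), and the term $\bq$ in its boundary condition does not enter the interior div-curl estimates at all. Hence the left-hand side of (\ref{divcurlp}) is controlled by the right-hand side using the same scheme, and I sketch only the main mechanism.

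For the divergence, I would rewrite the Lagrangian incompressibility $(\ak)^j_i v^{+i}_{,j} = 0$ as $\div v^+ = [\delta^j_i - (\ak)^j_i] v^{+i}_{,j}$, and exploit the fact that $\ak(0)=\id$ together with the bound $\|\ak(t) - \id\|_{4.5,+} \le C\sqrt{t}$ in the relevant Sobolev norms (cf.\ (\ref{Aest})). Differentiating $k$ times in $t$, applying the Leibniz rule and Sobolev multiplication estimates, and then integrating from $\eta^+(0)=\id$ yields both the sup-in-time control of $\|\div \partial_t^k \eta^+\|_{1.5+\mathfrak{n}-k,+}$ and the $\sqrt{\kappa}$-weighted top-regularity bound on $\div \eta^+$; the latter draws on the extra smoothness built into $\Ek$ through the parabolic regularization of (\ref{Eulerreg}d).

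For the curl, I would apply the Lagrangian curl operator $\varepsilon_{ijk}(\ak)_j^\ell \partial_\ell$ to the momentum equation (\ref{Eulerreg}b). The Piola identity $[(\ak)^\ell_j]_{,\ell} = 0$ combined with the antisymmetry of $\varepsilon_{ijk}$ annihilates the pressure gradient, leaving a transport-type equation for the Lagrangian vorticity $\varepsilon_{ijk}(\ak)^\ell_j v^{+k}_{,\ell}$ whose right-hand side is quadratic in $\nabla v^+$ and $\nabla \ak$ (since the transport term in (\ref{Eulerreg}b) does not appear in the plus-region equation (\ref{Eulerplus}a)). Differentiating $k$ times in $t$ produces commutators $[\partial_t^k, \ak]$ which are absorbed by Sobolev multiplication; solving the resulting transport equation along the identity flow and integrating in time then yields the curl part of (\ref{divcurlp}), with the $\sqrt{\kappa}$-weighted top-order piece again supplied by the parabolic boundary regularization.

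The main technical obstacle is keeping precise track of the commutator and product terms at the various intermediate regularities $H^{1.5+\mathfrak{n}-k}$ and $H^{2.5+\mathfrak{n}-k}$; this is handled, as in \cite{CoSh2007}, by interpolation between the $\sup_t\|\cdot\|_{s-1.5}$ and $\int_0^T\|\cdot\|_s^2\,dt$ pieces of the $\mV^s_+(T)$ norm, and by absorbing top-order contributions into the left-hand side using the small-time factor $\sqrt{T}$ that accompanies every differentiation of $\ak - \id$.
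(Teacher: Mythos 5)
Your proposal takes essentially the same route as the paper, which itself gives no argument beyond stating that the one-phase divergence--curl technique of \cite{CoSh2007} applies verbatim in $\Op$ (the $\bq$ term in the boundary condition playing no role in these interior estimates); your sketch of the mechanism—divergence from the constraint with $\ak-\id$ small in time, curl via the Lagrangian vorticity identity with the pressure contributing only lower-order terms, and the $\sqrt{\kappa}$-weighted pieces coming from the extra regularity in $\Ek$—is consistent with that reference. The only quibble is that the Lagrangian curl does not annihilate the pressure term exactly (a term of the form $\varepsilon_{ijk}(\ak)_j^\ell(\ak)^m_{k,\ell}q_{,m}$ survives), but it is of precisely the lower-order type you already account for.
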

Similar to the way of obtaining (\ref{curlvm}), the following lemma
is valid as well.
\begin{lemma}[Divergence and curl estimates in $\Om$] \label{lemma2}
Let ${\mathfrak n}$, $L_1$ and $L_2$ be defined as those in Lemma
\ref{lemma1}, and
$$M_0^-:= P(\|u_0^-\|_{2.5 + {\mathfrak n},-}, |\Gpm| _{3.5 +
{\mathfrak n}}, \sqrt{\kappa} \|u_0^-\|_{1.5+3{\mathfrak
n},-},\sqrt{\kappa} |\Gamma|_{1+3{\mathfrak n}})\,.$$ Then for
$j=1,2$,
\begin{align}
\sum_{k=1}^{{\mathfrak n}+2} \sup_{t\in[0,T]} \|L_j \partial_t^k
v^-(t)\|^2_{1.5+{\mathfrak n}-k,-} \le M_0^- + CT
\mP(\sup_{t\in[0,T]} \Ek(t))\,. \label{divcurlm}
\end{align}
\end{lemma}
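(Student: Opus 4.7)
The plan is to mimic the derivation of (\ref{curlvm}), but at the level of time-differentiated transport equations and the incompressibility constraint, rather than using elliptic methods. Since the only smoothing mechanism of the $\kappa$-problem (the parabolic term in (\ref{Eulerreg}d)) is supported on $\Gpm$ and drives only the $\Op$ side, no $\sqrt{\kappa}$ gain is available in $\Om$. This is the structural reason why the sum in (\ref{divcurlm}) starts at $k=1$: the term $\nabla\bA$ appearing inside $B^i(v)$ in (\ref{voreq}) consumes one spatial derivative, so one must trade a spatial derivative for the extra time derivative at each level.

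For the curl estimates, I would set $\omega^i := \varepsilon_{ijk}\bA_j^\ell v^{-k}_{,\ell}$, apply $\partial_t^{k-1}$ to the transport equation (\ref{voreq}) for $k = 1,\dots,\mathfrak{n}+2$, and obtain
\begin{align*}
(\partial_t^{k-1}\omega)_t + \bA^s_r(\bv^{-r} - \bv_e^{-r})(\partial_t^{k-1}\omega)_{,s}
&= \partial_t^{k-1} B^i(v) + \bigl[\partial_t^{k-1},\bA^s_r(\bv^{-r} - \bv_e^{-r})\partial_s\bigr]\omega\,.
\end{align*}
Composing with the flow map $\bz$ of the velocity field $\bA^T(\bv - \bv_e)$, exactly as in the derivation of (\ref{vorticity}), gives the representation
\begin{align*}
\partial_t^{k-1}\omega(x,t) = \bigl[\partial_t^{k-1}\omega(\cdot,0)\bigr]\circ\bz^{-1}(x,t) + \int_0^t F_k(\cdot,s)\circ\bz(x,s-t)\,ds\,,
\end{align*}
where $F_k$ collects the source term and the commutator. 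Taking the $H^{1.5+\mathfrak{n}-k}(\Om)$ norm and using $\|\bz(t)\|_{4.5,-} \le C(\bv^-)$ together with Sobolev multiplication to control each factor of $F_k$ by $\mP(\Ek)$, I would obtain the bound for $\|\curl \partial_t^k v^-\|_{1.5+\mathfrak{n}-k,-}$ after converting $\partial_t^{k-1}\omega$ to $\curl\partial_t^k v^-$ via Leibniz (the resulting commutators in $\bA$ are lower order and controlled by $\Ek$).

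The divergence estimate is simpler: differentiate the constraint $\bA^j_i v^{-i}_{,j} = 0$ from (\ref{Eulerreg}c) to get
\begin{align*}
\bA^j_i\,\partial_t^k v^{-i}_{,j} = -\sum_{\ell=1}^{k}\binom{k}{\ell}\partial_t^\ell\bA^j_i \cdot \partial_t^{k-\ell} v^{-i}_{,j}\,,
\end{align*}
and apply standard product estimates in $H^{1.5+\mathfrak{n}-k}(\Om)$, where at each level one of the two factors sits at least in $H^{1.5+\varepsilon}$ thanks to the definition of $\Ek$ in (\ref{energy}). Integrating in time and absorbing the initial data into $M_0^-$ yields (\ref{divcurlm}). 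The main obstacle is the bookkeeping: one must verify that for every $1 \le \ell \le k \le \mathfrak{n}+2$ the product $\partial_t^\ell\bA \cdot \partial_t^{k-\ell}\nabla v$ lands in $H^{1.5+\mathfrak{n}-k}(\Om)$ with a bound polynomial in $\Ek$, which is precisely where the one-derivative loss from $\nabla\bA$ in $B^i(v)$ forces $k \ge 1$.
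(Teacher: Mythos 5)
Your overall route---time-differentiating the Lagrangian vorticity transport equation (\ref{voreq}), composing with the flow $\bar{\zeta}$ as in (\ref{vorticity}), and handling the divergence through the constraint (\ref{Eulerreg}c)---is exactly the route the paper invokes (it says no more than ``similar to the way of obtaining (\ref{curlvm})''). However, as written your curl argument has a concrete off-by-one gap. Applying $\partial_t^{k-1}$ to (\ref{voreq}) and using the flow-map representation controls $\partial_t^{k-1}\omega$, where $\omega^i=\varepsilon_{ijk}\bA_j^\ell v^{-k}_{,\ell}$; its leading part is $\varepsilon_{ijk}\bA_j^\ell\,\partial_t^{k-1}v^{-k}_{,\ell}$, i.e.\ (up to commutators with $\partial_t^j\bA$) it yields $\curl\partial_t^{k-1}v^-$, not $\curl\partial_t^{k}v^-$. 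A Leibniz expansion only redistributes the $k-1$ time derivatives already present; it cannot create the $k$-th one. To reach the level claimed in (\ref{divcurlm}) you must use the equation once more, reading $\partial_t^{k}\omega$ off from $\partial_t^{k-1}\big(B(v)-\bA^s_r(\bv^{-r}-\bv_e^{-r})\omega_{,s}\big)$, or equivalently substituting the momentum equation (\ref{Eulerreg}b) for $v_t^-$; this is precisely the ``trade a time derivative for a space derivative'' mechanism you announce at the outset but never actually implement. As written, your scheme proves a statement with the index shifted by one (and one derivative weaker), not the stated estimate.

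The divergence part has the same defect in sharper form. After $k$ time derivatives of $\bA^j_i v^{-i}_{,j}=0$, the leading term in $\div\partial_t^k v^-$ is $(\delta^j_i-\bA^j_i)\partial_t^k v^{-i}_{,j}$, and $\Ek$ only controls $\partial_t^k v^-$ in $H^{3.5-k}(\Om)$ (for $k=\mathfrak{n}+2$ it is not controlled at all), so naive Sobolev multiplication places this product well below the claimed space $H^{1.5+\mathfrak{n}-k}(\Om)$; the $O(T)$ smallness of $\id-\bA$ produces the factor $CT$ but cannot compensate for missing derivatives. Here again the top time derivative must be eliminated through the equation, and at the top level $k=\mathfrak{n}+2$ the norm $H^{1.5+\mathfrak{n}-k}(\Om)=H^{-0.5}(\Om)$ is of negative order, so the bound must be obtained by duality rather than by the product estimates you invoke---an issue your proposal does not address. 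So while you have identified the correct family of ideas (and the correct structural reason, the loss caused by $\nabla\bA$ in $B(v)$, for starting the sum at $k=1$), the bookkeeping that constitutes the actual content of the lemma is not carried out correctly.
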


\subsubsection{$H^{-0.5}$-estimates for $v_{ttt}^\pm$ on the
boundary $\Gpm$ and $\Go$} By (\ref{Eulerreg}b),
\begin{alignat*}{2}
(\curl v^\pm_{ttt})^i =&\ \varepsilon_{ijk} \Big[ [\delta_j^\ell -
(\ak)_j^\ell] v_{ttt,\ell}^{\pm k} - (a_{\kappa tt})_j^\ell
v_{t,\ell}^{\pm k} - (a_{\kappa t})_j^\ell v_{tt,\ell}^{\pm k} &&\qquad\text{in $\Omega^\pm$}\,,\\
&\qquad - \Big((\ak)_j^\ell [A_r^s (v^{-r} - v_e^{-r}) v_{,s}^{-k}]_{,\ell}\Big)_{tt} \Big]\\
\div v^\pm_{ttt} =&\ (\delta_j^\ell - A_j^\ell) v_{ttt,\ell}^{\pm i}
- (A_{ttt})_i^j v_{,j}^{\pm i} - 3 (A_{tt})_i^j v_{t,j}^{\pm i} - 3
(A_t)_i^j v_{tt,j}^{\pm i} &&\qquad\text{in $\Omega^\pm$}\,.
\end{alignat*}
Since $v^\pm_{ttt}\in L^2(0,T; H^{1.5}(\Omega^\pm))$ (with $\kappa$
dependent estimate), $\curl v^\pm_{ttt}$ and $\div v^\pm_{ttt}$ are
both in $L^2(\Omega^\pm)$ and hence by (\ref{tracetemp}),
$\|v^\pm_{ttt}\|_{H^{-0.5}(\bdy\Omega^\pm)}$ exists. For $\varphi
\in H^1(\Om)$,
\begin{align*}
& \int_\Om \curl v^-_{ttt} \cdot \varphi dx \le \varepsilon_{ijk}
\int_{\bdy\Om} [\delta_j^\ell - (\ak)_j^\ell] v_{ttt}^k \varphi^i
N_\ell dS \\
&\qquad 
- \varepsilon_{ijk} \int_{\bdy\Om} A_j^\ell (\ak)_r^s (v^{-r} -
v_e^{-r}) v_{tt,\ell}^{-k} \varphi^i N_s dS +
C\mP(\Ek(t))\|\varphi\|_{1,-}\,.
\end{align*}
Since $\ak = \id$ and $v_e^-=0$ outside $\Omega'$, we find that
\begin{align*}
& \varepsilon_{ijk} \int_{\bdy\Om} A_j^\ell (\ak)_r^s (v^{-r} -
v_e^{-r}) v_{tt,\ell}^{-k} \varphi^i N_s dS \\
=&\ \varepsilon_{ijk} \Big[\int_\Gpm A_j^\ell \sqrt{\gk}
[n_\kappa\cdot (v_e^- - v^-)] v_{tt,\ell}^{-k} \varphi^i dS +
\int_\Go (v^-\cdot N) v_{tt,j}^{-k} \varphi^i dS\Big] \\
=&\ 0\,,
\end{align*}
where we use the boundary condition (\ref{Eulerreg}e) and
(\ref{Eulerreg}f) with $v_e^- = v^+$ on $\Gpm$ to conclude the last
equality. Therefore,
\begin{align*}
\Big|\int_\Om \curl v^-_{ttt} \cdot \varphi dx\Big| &\le
\varepsilon_{ijk} 
\int_{\Gpm} [\delta_j^\ell -
(\ak)_j^\ell] v_{ttt}^k \varphi^i dS 
+ C \mP(\Ek(t)) \|\varphi\|_{1,-} \\
&\le C \Big[t |v_{ttt}|_{-0.5,\pm} + \mP(\Ek(t))\Big]
\|\varphi\|_{1,-}
\end{align*}
which implies
\begin{align*}
\|\curl v^-_{ttt}\|_{H^1(\Om)'} \le C\Big[t|v^-_{ttt}|_{-0.5,\pm} +
\mP(\Ek(t))\Big]\,.
\end{align*}
Similarly,
\begin{align*}
\|\curl v^+_{ttt}\|_{H^1(\Op)'} &\le
C\Big[t |v^+_{ttt}|_{-0.5,\pm} + \mP(\Ek(t))\Big]\,,\\
\|\div v^\pm_{ttt}\|_{H^1(\Omega^\pm)'} &\le
C\Big[t\|v_{ttt}\|_{H^{-0.5}(\bdy\Omega^\pm)} + \mP(\Ek(t))\Big]\,.
\end{align*}
Therefore, by (\ref{tracetemp}),
\begin{align*}
\|v^\pm_{ttt}\|_{H^{-0.5}(\bdy\Omega^\pm)} &\le C
\Big[\|v^\pm_{ttt}\|_{L^2(\Omega)} + \|\div
v^\pm_{ttt}\|_{H^1(\Omega)'} + \|\curl v^\pm_{ttt}\|_{H^1(\Omega)'}\Big] \nonumber\\
&\le C T \|v_{ttt}^\pm\|_{H^{-0.5}(\bdy\Omega^\pm)} + C \mP(\Ek(t))
\,.
\end{align*}
It then follows from choosing $T>0$ small enough that
\begin{align}
|v_{ttt}|_{-0.5,\pm} + |v^-_{ttt}|_{-0.5,\Go} \le C \mP(\Ek(t)) \,.
\label{trace}
\end{align}

\section{Estimates for velocity, pressure, and their time derivatives at time $t=0$} 
\label{sec::time0}
In this section, we estimate the time derivatives of
the velocity and pressure at the initial time $t=0$. We use $w_k$,
$k=1,2,3$, and $q_\ell$, $\ell=0,1,2$, to denote $\partial_t^k v(0)$
and $\partial_t^\ell q(0)$.
Let $\varphi_\kappa$ be defined by
\begin{subequations}\label{phieq}
\begin{alignat}{2}
[\mJ^{-1} a_i^j (\mJ^{-1} a_i^k \varphi_\kappa)_{,k}]_{,j} &= 0 &&
\qquad\text{in \ \ $\Op$}\,,\\
\varphi_\kappa = - \sigma L_g(\eta_e)\cdot \nk &- \kappa \Delta_0
(v\cdot n_k) &&\qquad \text{on \ \ $\Gpm$}\,,\\
\varphi_\kappa &= 0 &&\qquad \text{on \ \ $\Go$}\,,
\end{alignat}
\end{subequations}
and the quantities $\varphi_0$, $\varphi_1$ and $\varphi_2$ be
defined by $\varphi_\kappa(0)$, $\varphi_{\kappa t}(0)$ and
$\varphi_{\kappa tt}(0)$, respectively.

Let $q_0^+$ and $q_0^-$ denote the initial pressure $q(0)$ in $\Op$
and $\Om$, respectively, then $q_0^+$ and $q_0^-$ satisfy
\begin{subequations}\label{initqeq}
\begin{alignat}{2}
- \frac{1}{\rho^+}\Delta (q_0^+ - \varphi_0) &= f^+ &&\qquad
\text{in \ \ $\Op$}\,,\\
- \frac{1}{\rho^-} \Delta q_0^- &= f^-
&&\qquad
\text{in \ \ $\Om$}\,,\\
\frac{1}{\rho^+}\frac{\partial q_0^+}{\partial N} &= - w_1^+ \cdot N
&& \qquad \text{on \ \ $\bdy\Op$}\,,\\
\frac{1}{\rho^-}\frac{\partial q_0^-}{\partial N} &= (- w_1^- +
\nabla_{(v_e^-(0) - u_0^-)} u_0^-) \cdot N && \qquad \text{on \ \
$\bdy\Om$}\,,
\end{alignat}
\end{subequations}
where $f^\pm = (\nabla u_0^\pm)^T:\nabla u_0^\pm$,
and $N$ denotes the unit normal of $\Gpm$ from $\Om$ into $\Op$, or
the outward unit normal of $\Go$.

\begin{remark}
The right-hand side of (\ref{initqeq}b) is in fact $f^- - (v_e^-(0)
- u_0^-)\cdot \nabla \div u_0^-$ while the last term is zero by the
divergence free constraint of the initial data.
\end{remark}
\vspace{.1in}

\noindent For all $\psi\in H^1(\Op)\cap H^1(\Om)$ so that $\psi^+ =
\psi^-$ on $\Gpm$, we have
\begin{align}
& \frac{1}{\rho^+} \int_\Op \nabla (q_0^+- \varphi_0) \cdot \nabla
\psi dx + \frac{1}{\rho^-} \int_\Om \nabla q_0^- \cdot \nabla \psi
dx = \int_\Op f^+ \psi dx
+ \int_\Om f^- \psi dx \nonumber \\
-& \int_\Go (w_1^- \cdot N - \nabla_{v_e^-(0) - u_0^-} u_0^- ) \psi
dS - \int_\Gpm (w_1^+ - w_1^- + \nabla \varphi_0)\cdot N \psi dS\,.
\label{weakformqtemp}
\end{align}
Since $[v\cdot \nk]_\pm = 0$ and $v^+\cdot \nk = 0$ on $\Go$, it
follows that
\begin{alignat*}{2}
w_1^+ \cdot N + u_0^+ \cdot n_{\kappa t}(0) &= w_1^- \cdot N + u_0^-
\cdot n_{\kappa t}(0) &&\qquad\text{on \ \
$\Gpm$}\,,\\
w_1^-\cdot N &= 0 && \qquad\text{on \ \ $\Go$}\,,
\end{alignat*}
and hence (\ref{weakformqtemp}) implies
\begin{align}
& \frac{1}{\rho^+} \int_\Op \nabla q_0^+ \cdot \nabla \psi dx +
\frac{1}{\rho^-} \int_\Om \nabla
(q_0^- - \varphi_0)\cdot \nabla \psi dx = \int_\Op f^+ \psi dx + \int_\Om f^- \psi dx \nonumber\\
+& \int_\Gpm [(u_0^+ - u_0^-)\cdot n_{\kappa t}(0) +\nabla \varphi_0
\cdot N] \psi dS +\int_\Go \nabla_{v_e^-(0) - u_0^-} u_0^- \cdot N
\psi dS \,. \label{weakforminitq}
\end{align}
Let $Q_0 = q_0^+ - \varphi_0$ in $\Op$ and $Q_0 = q_0^-$ in $\Om$.
Since $Q_0^+ = Q_0^-$ on $\Gpm$, we can use $Q_0$ (and its
difference quotients) as a test function in (\ref{weakforminitq}).
Since $n_{\kappa t}(0) = - g_0^{\alpha\beta} (u_{0
\kappa,\beta}\cdot N)\id_{,\alpha}$ and $\|v_e(0)\|_{k,-} \le C
\|u_0\|_{k,+}$, by standard difference quotient technique, for
$s>1.5$ for ${\mathfrak n} = 2$ or $s> 1.75$ for ${\mathfrak n}=3$,
\begin{align*}
&\|q_0^+\|^2_{s,+} + \|q_0^- - \varphi_0\|^2_{s,-} \\
\le&\ C \|f\|^2_{s-2,\pm} + C |u_0^+ \cdot \bar{n}_{\kappa t}(0) -
u_0^- \cdot \bar{n}_{\kappa t}(0) + \nabla_{v_e^-(0) - u_0^-}
u_0^-\cdot
N + \nabla\varphi_0\cdot N|^2_{s-1.5} \\
\le&\ C\mP(\|u_0\|^2_{s,\pm}) + C(|\Gpm|^2_{s+1.5} + \kappa
|u_0\cdot N|^2_{s+1.5})\,.
\end{align*}
By the elliptic estimate for $\varphi$ in (\ref{phieq}) together
with (\ref{Eulerreg}b), we find that for $s> 1 + s({\mathfrak n})$,
\begin{align}
\|w_1\|^2_{s-1,\pm} + \|q_0\|^2_{s,\pm} \le C \mP(\|u_0\|^2_{s,\pm},
|\Gpm|^2_{s+1.5} , \|\sqrt{\kappa} u_0^+\|^2_{s+2,+})\,.
\label{initestimate1}
\end{align}
For $j=1,2$, the
quantities $q_1^\pm$ and $q_2^\pm$ satisfy
\begin{alignat*}{2}
\frac{1}{\rho^\pm} \Delta (q_j^\pm - \varphi_j^+) &= h_j^\pm + k_j^-
+ \phi_j^+ && \qquad\text{in \ \ $\Omega^\pm$}\,,\\
\frac{1}{\rho^\pm}\frac{\partial q_j^\pm}{\partial N} &= -
(\partial_t^{j+1} v^\pm)(0)\cdot N + j (\nabla q_{j-1}^\pm)^T \nabla
u_{0 \kappa}^\pm N && \qquad\text{on \ \ $\bdy\Omega^\pm$}\,,\\
&\hspace{0.38cm} + 2 (j-1) \nabla q_0^\pm (\nabla u_{0 \kappa}^\pm
\nabla u_{0 \kappa}^\pm - \nabla w_{1 \kappa}^\pm) N + B_j^- &&
\end{alignat*}
where
\begin{align*}
h_1^\pm &= -2 u_{0 \kappa,i}^j w_{1,j}^{\pm i} + 2 u_{0 \kappa,i}^r
u_{0 \kappa,r}^\ell u_{0,\ell}^{\pm i} - w_{1 \kappa,i}^\ell
u_{0,\ell}^{\pm i} + \nabla q_0^\pm \cdot
\Delta u_{0 \kappa} + u_{0 \kappa,j}^i q^\pm_{0,ij}\,,\\
h_2^\pm &= -3 u_{0 \kappa,i}^j w_{2,j}^{\pm i} + 6 u_{0 \kappa,i}^r
u_{0 \kappa,r}^\ell w_{1,\ell}^{\pm i} - 3 w_{1 \kappa,i}^\ell
w_{1,\ell}^{\pm i} + 6 [\nabla u_{0
\kappa} \nabla u_{0 \kappa} \nabla u_{0 \kappa} ]^T:\nabla u^\pm_0 \\
&\hspace{.38cm} - 4 (\nabla u_0^\pm \nabla w_{1 \kappa}): (\nabla
u_{0 \kappa})^T - 2 (\nabla u_0^\pm \nabla u_{0
\kappa}): (\nabla w_{1 \kappa})^T + (\nabla w_{2 \kappa})^T:\nabla u^\pm_0 \\
&\hspace{0.38cm} + \div [2 (\nabla w_{1 \kappa})^T\nabla q_1^\pm +
2(\nabla u_{0 \kappa}\nabla u_{0 \kappa})^T\nabla q^\pm_0
- (\nabla w_{1 \kappa})^T\nabla q^\pm_0 ] \,,\\
k_1 &= \Big[- u_{0 \kappa,i}^k (v_e(0)^{-j} - u_0^{-j}) u_{0,k}^{-i}
+ (v_{e t}^{-j}(0) - w_1^{-j}) u_{0,j}^{-i} +
(v_e^{-j}(0) - u_0^-) w_{1,j}^{-i} \Big]_{,i}\,,\\
k_2 &= \Big[(2u_{0 \kappa,j}^\ell u_{0 \kappa ,\ell}^k - w_{1
\kappa,j}^k) (v_e^{-j}(0) - u_0^{-j})
u_{0,k}^{-i} + (v_{e tt}^{-j}(0) - w_2^{-j}) u_{0,j}^{-i} \\
&\hspace{.4cm} + (v_e^{-j}(0) - u_0^{-j}) w_{2,j}^{-i} - 2 u_{0
\kappa,j}^k (v_{et}^{-j}(0) - w_1^{-j}) w_{1,k}^{-i} -
2 u_{0 \kappa,j}^k (v_e^{-j}(0) - u_0^{-j}) w_{1,k}^{-i} \\
&\hspace{.4cm} + 2 (v_{et}^j(0) - w_1^{-j}) w_{1,j}^{-i} \Big]_{,i}
\,, \end{align*}
and
\begin{align*} \phi_1 &= - 2 \nabla u_{0
\kappa}: \nabla^2 \varphi_0 - \nabla
\varphi_0 \cdot \Delta u_{0 \kappa}\,,\\
\phi_2 &= - 4 (\nabla u_{0 \kappa} \nabla u_{0 \kappa}):\nabla^2
\varphi_0 + 2 \nabla w_{1 \kappa}: \nabla^2\varphi_0 - \div (\nabla
u_{0 \kappa}
\nabla u_{0 \kappa})\cdot \nabla \varphi_0 \\
&\hspace{0.38cm} + \Delta w_{1 \kappa}\cdot \nabla \varphi_0 + 2
\Delta u_{0 \kappa}\cdot \nabla \varphi_1 + 4 \nabla u_{0
\kappa}:\nabla^2 \varphi_1 + 2 \nabla [(\nabla u_{0
\kappa})^T \nabla\varphi_1]:\nabla u_{0 \kappa}\,,\\
B_1 &= (v_e^-(0) - u_0^-)^T \nabla u_{0 \kappa} (\nabla u_0^-)^T N -
(v_{et}^-(0) - w_1^-)^T \nabla u_0^- N
- (v_e^-(0) - u_0^-)^T \nabla w_1^- N\,,\\
B_2 &= \Big[\nabla u_0^- (2 \nabla u_{0 \kappa} \nabla u_{0 \kappa}
- \nabla w_{1 \kappa})(v_e^-(0) -
u_0^-) + \nabla u_0^- (v_{e tt}^-(0) - w_2^-) \\
&\hspace{0.4cm} + \nabla w_2^- (v_e^-(0) - u_0^-) - 2 (\nabla u_0^-
\nabla u_{0 \kappa})(v_{et}^-(0) - w_1^-) - 2 \nabla w_1^-
\nabla u_{0 \kappa} (v_e^-(0) - u_0^-) \\
&\hspace{0.4cm} - 2 \nabla w_1^- (v_{et}^-(0) - w_1^-) \Big]N \,,
\end{align*}
where $u_{0 \kappa}$, $w_{1\kappa}$ and $w_{2\kappa}$ are $M^+
u^+_{0 \kappa}$, $M^+ w^+_{1\kappa}$ and $M^+ w^+_{2\kappa}$,
respectively. Similar to the estimate of $q_0^\pm$, since on $\Gpm$,
\begin{align*}
q_j^+ - \varphi_j &= q_j^- \qquad\text{for $j=1, 2$}\,, \\
[v_{tt}^+(0) - v_{tt}^-(0)]\cdot N &= -2[w_1]_\pm \cdot
\bar{n}_{\kappa t}(0) - [u_0]_\pm \cdot \bar{n}_{\kappa tt}(0)\,,\\
[v_{ttt}^+(0) - v_{ttt}^-(0)]\cdot N &= -3 [w_2]_\pm \cdot
\bar{n}_{\kappa t}(0) - 3 [w_1]_\pm \cdot \bar{n}_{\kappa tt}(0) -
[u_0]_\pm \cdot \bar{n}_{\kappa ttt}(0)\,,
\end{align*}
and on $\Go$,
\begin{align*}
v_{tt}^-(0)\cdot N = v_{ttt}^-(0)\cdot N = 0\,,
\end{align*}
we find that for $s\ge 2$,
\begin{align*}
& \|w_2\|^2_{s-2,\pm} + \|q_1\|^2_{s-1,\pm} \nonumber\\
\le&\ C\mP(\|u_0\|^2_{s,\pm}, |\Gpm|^2_{s+1.5}, \|\sqrt{\kappa}
u_0^+\|^2_{s+2,+}, \|\sqrt{\kappa} w_1^+\|^2_{s+1,+})
\end{align*}
and for $s\ge 3$, 
\begin{align*}
& \|w_3\|^2_{s-3,\pm} + \|q_2\|^2_{s-2,\pm} \\ \le&\
C\mP(\|u_0\|^2_{s,\pm}, |\Gpm|^2_{s+1.5} + \kappa |u_0\cdot
N|^2_{s+1.5} + \kappa |w_1^+\cdot N|^2_{s+0.5} + \kappa |w_2^+\cdot
N|^2_{s-0.5})\,,\nonumber
\end{align*}
where we also use the boundedness of the extension operator $M$ so
that
\begin{align*}
\|\partial_t^k v_{e}(0)\|_{s,-} \le C\|w_k^+\|^2_{s,+}\,.
\end{align*}


\section{Pressure estimates} \label{sec::pressure}
The estimates for the pressure
and its time derivatives are exactly the same as (12.1) in
\cite{CoSh2007}. In \cite{CoSh2007}, the $L^2$-estimate for the pressure is
found by studying a Dirichlet problem, but in the two-phase problem
with fixed outer boundary, the $L^2$-estimate is not necessary
because of the Poincare inequality. Therefore,
\begin{align}
\|q(t)\|^2_{3.5,\pm} + \|q_t(t)\|^2_{2.5,\pm} +
\|q_{tt}(t)\|^2_{1,\pm} \le C \mP(\Ek(t)) \label{pressureest}
\end{align}
for some constant $C$ independent of $\kappa$.

\begin{remark}
The estimates for $q^-$, $q^-_t$ and $q^-_{tt}$ require the control
of $\|v^-\|_{3.5,-}$, $\|v^-_t\|_{2.5,-}$, $\|v^-_{tt}\|_{1.5,-}$
and $\|v^-_{ttt}\|_{0,-}$, respectively. This is the only reason we
need to include the estimates for $\partial_t^j v^-$ into our
definition of energy (\ref{energy}). Note that we do not need
$\|\eta^-\|_{4.5,-}$ in order to control $\partial_t^j q^-$.
\end{remark}

\section{$\kappa$-independent estimates}\label{kindpest} 

We also make use of the following
inequality which follows from Morrey's inequality (see (2.6) in
\cite{CoSh2007}). For $U\in W^{1,p}(\Gpm)$,
\begin{align}
|U_\kappa(x) - U(x)| \le C \kappa^{1-\frac{\mathfrak n}{p}} |DU|_p
\label{Morrey}
\end{align}

Test (\ref{Eulerreg})
against a function $\varphi\in H^{\frac{3}{2}}(\Op)\cap
H^{\frac{3}{2}}(\Om)$ with $\varphi^-\cdot N = 0$ on $\Go$,
\begin{align}
& \int_\Op \rho^+ \mJ_\kappa v_t^{+i} \varphi^i dx + \int_\Om
\rho^-\Big[\mJ_\kappa v^{-i}_t + (\ak)_j^k (v^{-j} -
v_e^{-j}) v^{-i}_{,k}\Big]\varphi^i dx \nonumber\\
+& \int_\Op (\ak)_i^j q^+_{,j}  \varphi^{+i} dx + \int_\Om (\ak)_i^j
q^-_{,j} \varphi^{-i} dx = 0\,. \label{weakform}
\end{align}
Similar to those estimates in \cite{CoSh2007}, the $\kappa$-independent
estimate consists of studying the three time differentiated problem,
three tangential space differentiated problem, and the intermediate
problems with mixing time and tangential space derivatives. Most of
the estimates are essentially the same as those in \cite{CoSh2007}, and
in the following sections we only list those terms which required
further study.

Before proceeding, we remark that those energy estimates in
\cite{CoSh2007} can be refined a bit further. For example, the energy
estimate for the third time-differentiated $\kappa$-problem ((12.6)
in \cite{CoSh2007}) can be refined as
\begin{align*}
&\sup_{t\in[0,T]} \Big[\|v_{ttt}\|^2_0 + |v_{tt}\cdot\nk|_1^2\Big] +
\int_0^T |\sqrt{\kappa} \partial_t^3 v\cdot\nk|^2_1 dt \le
M_0(\delta) + \delta \sup_{t\in[0,T]} \Ek(t) \\
+&\ C T \mP(\sup_{t\in[0,T]} \Ek(t)) + C(\delta)
\Big[\|v_t\|^2_{2.5} + \|v\|^2_{3.5} + \|\eta_e\|^2_{4.5} + \int_0^T
\|\sqrt{\kappa} v_{tt}\|^2_{2.5} dt \Big]\,,
\end{align*}
where the difference is not having
\begin{align*}
C \sup_{t\in[0,T]}\Big[P(\|v_t\|^2_{2.5}) + P(\|v\|^2_{3.5} +
P(\|\eta\|^2_{4.5})\Big] + CP(\|\sqrt{\kappa}
v_{tt}\|^2_{L^2(0,T;H^{2.5}(\Omega))})
\end{align*}
on the right-hand side of the inequality. To see this, for example,
one such term comes from estimating
\begin{align*}
\sup_{t\in[0,T]} |P(v,\partial\eta_\kappa)|_{L^\infty}(\Go) \int_0^T
|\sqrt{\kappa} \partial_t^3 v\cdot\nk|_1 |\sqrt{\kappa} \partial_t^2
v_\kappa|_2 dt\,.
\end{align*}
Since $P(v,\partial\eta_\kappa)_t \in L^\infty(0,T;L^1(\Gamma))$, by
the fundamental theorem of calculus,
\begin{align*}
\sup_{t\in[0,T]} |P(v,\partial\eta_\kappa)|_{L^\infty(\Gamma)} \le
M_0 + C T \mP(\sup_{t\in [0,T]} \Ek(t))
\end{align*}
and hence
\begin{align*}
&\sup_{t\in[0,T]} |P(v,\partial\eta_\kappa)|_{L^\infty} \int_0^T
|\sqrt{\kappa} \partial_t^3 v\cdot\nk|_1 |\sqrt{\kappa} \partial_t^2
v_\kappa|_2 dt \\
\le&\ \delta \sup_{t\in[0,T]} \Ek(t) + CT\mP(\sup_{t\in[0,T]}
\Ek(t)) + C(\delta) \int_0^T \|\sqrt{\kappa} v^+_{tt}\|^2_{2.5}
dt\,,
\end{align*}
instead of having $\|\sqrt{\kappa}
v_{tt}\|^4_{L^2(0,T;H^{2.5}(\Omega))}$ in the bound shown in
\cite{CoSh2007}. Therefore, the energy estimates we cite from
\cite{CoSh2007} will have only one polynomial type of term in the bound:
$CT\mP(\sup_{t\in[0,T]} \Ek(t))$.

In this section, we will make use of the following equality which
follows from (\ref{Eulerreg}e)
\begin{align}
\nk\cdot(v_{ttt}^+ - v_{ttt}^-) = n_{\kappa ttt}\cdot(v^- - v^+) - 3
n_{\kappa tt}\cdot (v_t^+ - v_t^-) - 3 n_{\kappa t}\cdot(v_{tt}^+ -
v_{tt}^-)\,. \label{vneq}
\end{align}

\subsection{Estimates for the third time-differentiated $\kappa$-problem} Three time
differentiate (\ref{weakform}), and then use $v_{ttt}$ as a test
function and integrate in time from $0$ to $T$, we find that
\begin{align*}
& \sum_{\sign = \pm} \int_0^T \int_{\Omega^\sign} \rho^\sign
(\mJ_\kappa v_t^{\sign i})_{ttt} v_{ttt}^{\sign i} + \Big[(\ak)^j_i
q^\sign_{,j} \Big]_{ttt} v_{ttt}^{\sign i} dx dt\\
+&\int_0^T \int_\Om \rho^- \Big[(\ak)^k_j (v^{-j} - v_e^{-j})
v_{,k}^{-i}\Big]_{ttt} v_{ttt}^{-i} dx dt = 0\,.
\end{align*}
The terms needed additional analysis are
\begin{align*}
\mI_1 &= \int_0^T \int_\Om \rho^- \Big[(\ak)^k_j (v^{-j} - v_e^{-j})
v_{,k}^{-i}\Big]_{ttt} v_{ttt}^{-i} dx dt\,,\\
\mI_2 &= \int_0^T \int_\Op \Big[(\ak)_i^j q^+_{,j} \Big]_{ttt}
v_{ttt}^{+i} dx dt + \int_0^T \int_\Om \Big[(\ak)_i^j
q^-_{,j}\Big]_{ttt} v_{ttt}^{-i} dx dt\,.
\end{align*}
The worst terms of $\mI_1$ is when all the time derivatives hit
$v_{,k}$, while the other combinations are bounded by $C\mP(\Ek)$.
Therefore,
\begin{align*}
\mI_1 \le& \int_0^T \int_\Om \rho^- (\ak)^k_j (v^{-j} - v_e^{-j})
v_{ttt,k}^{-i} v_{ttt}^{-i} dx dt + CT\mP(\Ek) \\
=&\ \frac{1}{2} \int_0^T \int_\Om \rho^- (\ak)^k_j (v^{-j} -
v_e^{-j}) |v_{ttt}|^2_{,k} dx dt + CT\mP(\Ek) \\
=
& - \frac{1}{2} \int_0^T \int_{\bdy\Om} \rho^- (\ak)^k_j (v^{-j} -
v_e^{-j}) N_k |v_{ttt}|^2 dS dt + CT\mP(\Ek)\,.
\end{align*}
The boundary of $\Om$ consists of $\Gpm$ and $\Go$. On $\Go$, $\ak =
\id$ and $v_e = 0$. Therefore, by (\ref{Eulerreg}f),
\begin{align*}
\int_\Go \rho^- (\ak)^k_j (v^{-j} - v_e^{-j}) N_k |v_{ttt}|^2 dS =
\int_\Go \rho^- (v^-\cdot N) |v_{ttt}|^2 dS = 0\,.
\end{align*}
On $\Gpm$, since $v_e^- = v^+$ and $n_\kappa^j =
g_\kappa^{-\frac{1}{2}} (\ak)_j^k N_k$, boundary condition
(\ref{Eulerreg}e) implies that
\begin{align*}
\int_\Gpm \rho^- (\ak)^k_j (v^{-j} - v_e^{-j}) N_k |v_{ttt}|^2 dS =
\int_\Gpm \sqrt{g_\kappa} \rho^- [v\cdot \nk]_\pm |v_{ttt}|^2 dS =
0\,.
\end{align*}
Therefore,
\begin{align}
\mI_1 \le  CT\mP(\Ek)\,. \label{I1}
\end{align}
The worst terms of $\mI_2$ is when all the time derivatives hit $q$.
Therefore,
\begin{align*}
\mI_2 \le& \int_0^T\int_\Op (\ak)_i^j q^+_{ttt,j} v_{ttt}^{+i} dx dt
+ \int_0^T \int_\Om (\ak)_i^j q^-_{ttt,j} v_{ttt}^{-i} dx dt + CT\mP(\Ek) \\
=& \sum_{\sign=\pm}\int_0^T \Big[\int_\Gpm q^\sign_{ttt} (\ak)^j_i
N_j v_{ttt}^{\sign i} dS - \int_{\Omega^\sign} q^\sign_{ttt}
(\ak)^j_i v_{ttt,j}^{\sign i} dx\Big] dt + CT\mP(\Ek) \\
=&\ \mI_{21} + \mI_{22} + CT\mP(\Ek)\,.
\end{align*}
For $\mI_{21}$, it follows that
\begin{align*}
\mI_{21} =& \int_0^T \int_\Gpm \sqrt{\gk} n_\kappa^i \Big[q_{ttt}^+
v_{ttt}^{+i} - q_{ttt}^- v_{ttt}^{-i}\Big] dS dt \\
=& \int_0^T \Big[\int_\Gpm \sqrt{\gk} q_{ttt}^- (v_{ttt}^+ -
v_{ttt}^-)\cdot\nk dS + \int_\Gpm \sqrt{\gk} (q^+ - q^-)_{ttt}
(v_{ttt}^+\cdot\nk) dS \Big] dt\,.
\end{align*}
By (\ref{vneq}) and substituting $-\sigma L_g(\eta_e)\cdot\nk -
\kappa \Delta_0 (v\cdot \nk) \nk$ for $(q^+ - q^-)$, we apply the
estimates as in \cite{CoSh2007} to obtain
\begin{alignat*}{2}
\mI_{21} \le& - \int_0^T \int_\Gpm \sqrt{\gk} q_{ttt}^-\Big[(v^+ -
v^-)\cdot n_{\kappa ttt} + 3 n_{\kappa t}\cdot(v_{tt}^+
- v_{tt}^-)\Big]dS dt &&\quad(\equiv I_{21_a})\\
& - 3 \int_0^T \int_\Gpm \sqrt{\gk} q_{ttt}^- (v_t^+ - v_t^-)\cdot
n_{\kappa tt} dS dt
&&\quad(\equiv I_{21_b}) \\
& + \delta \sup_{t\in[0,T]}\Ek(t) + M_0(\delta) + C T
\mP(\sup_{t\in[0,T]}\Ek(t)) + C(\delta) \Big[\|v^+_t\|^2_{2.5,+} \\
& + \|v^+\|^2_{3.5,+} + \|\eta_e\|^2_{4.5,+} \Big]\,.
\end{alignat*}
Integrating by parts in time, since $\Big[\sqrt{\gk} (v_t^+ -
v_t^-)\cdot n_{\kappa tt})\Big]_t\in L^\infty(0,T;L^2(\Gpm))$, using
the same techniques as in \cite{CoSh2007}, we find that
\begin{align}
\frac{\mI_{21_b}}{3} =&\ \int_0^T \int_\Gpm q_{tt}^- \Big[\sqrt{\gk}
(v_t^+ - v_t^-)\cdot n_{\kappa tt}\Big]_t dS dt - \int_\Gpm q_{tt}^-
\sqrt{\gk} (v_t^+ - v_t^-)\cdot n_{\kappa
tt} dS\Big|_{t=0}^{t=T} \nonumber\\
\le&\ \delta \sup_{t\in[0,T]}\Ek(t) + M_0(\delta) + C T
\mP(\sup_{t\in[0,T]}\Ek(t))\,. \label{I21b}
\end{align}
Let the first and the second term of $\mI_{21_a}$ be denoted by
$\mI_{21_{a_1}}$ and $\mI_{21_{a_2}}$, respectively. Integrating by
parts in time,
\begin{align*}
\frac{\mI_{21_{a_2}}}{3} =& \int_0^T \int_\Gpm
q_{tt}^-\Big[\sqrt{\gk} n_{\kappa t} \cdot (v_{ttt}^+ - v_{ttt}^-) +
(\sqrt{\gk} n_{\kappa t})_t \cdot
(v_{tt}^+ - v_{tt}^-)\Big] dS dt \\
& - \int_\Gpm \sqrt{\gk} q_{tt}^- n_{\kappa t} \cdot (v_{tt}^+ -
v_{tt}^-) dS \Big|_{t=0}^{t=T}\,.
\end{align*}
By (\ref{trace}), $\displaystyle{\Big[\sqrt{\gk} n_{\kappa t}\cdot
(v_{tt}^+ - v_{tt}^-)\Big]_t \in L^2(0,T;H^{-0.5}(\Gamma))}$. Since
$q_{tt}^- \in L^\infty(0,T;H^{0.5}(\Gamma))$, it follows that
\begin{align*}
\int_\Gpm \sqrt{\gk} q_{tt}^- n_{\kappa t}\cdot (v_{tt}^+ -
v_{tt}^-) dS \Big|_{t=0}^{t=T} \le \delta \sup_{t\in[0,T]}\Ek(t) +
M_0(\delta) + C T \mP(\sup_{t\in[0,T]}\Ek(t))\,.
\end{align*}
Again by (\ref{trace}), we can estimate the first integral of
$\mI_{21_{a_2}}$ and obtain
\begin{align}
\mI_{21_{a_2}} \le \delta \sup_{t\in[0,T]}\Ek(t) + M_0(\delta) + C T
\mP(\sup_{t\in[0,T]}\Ek(t))\,.
\end{align}
For $\mI_{21_{a_1}}$, integrating by parts in time again,
\begin{align*}
\mI_{21_{a_1}} =& \int_0^T \int_\Gpm q_{tt}^- \Big[\sqrt{\gk} (v^+ -
v^-)\cdot \partial_t^4\nk + [\sqrt{\gk}
(v^+ - v^-)]_t n_{\kappa ttt}\Big] dS dt \\
& - \int_\Gpm \sqrt{\gk} q_{tt}^- (v^+ - v^-)\cdot n_{\kappa ttt} dS
\Big|_{t=0}^{t=T}\,.
\end{align*}
The second term of $\mI_{21_{a_1}}$ can be bounded by $C
T\mP(\sup_{t\in[0,T]}\Ek(t))$ since the integrand is in
$L^\infty(0,T;L^1(\Gpm))$. Since $n_{\kappa ttt} \sim
F_1(\partial\eta_\kappa) \partial v_{\kappa tt} +
F_2(\partial\eta_\kappa,\partial v_\kappa)
\partial v_{\kappa t}$, by the fact that $\Big[\sqrt{\gk} (v^+ - v^-) (F_1
+ F_2(\partial\eta_\kappa,\partial v_\kappa)\partial v_{\kappa
t})\Big]_t\in L^\infty(0,T;L^2(\Gpm))$ and
$H^{0.5}(\Gamma)$-$H^{-0.5}(\Gamma)$ duality pairing,
\begin{align*}
&\int_\Gpm \sqrt{\gk} q_{tt}^- (v^+ - v^-)\cdot n_{\kappa ttt}
dS\Big|_{t=0}^{t=T} \\
\le&\ M_0 + \Big[M_0 + C T \mP(\sup_{t\in[0,T]}\Ek(t))\Big]
|q_{tt}^-(T)|_{0.5} \Big[|\partial v_{\kappa tt}(T)|_{-0.5} + 1\Big] \\
\le&\ M_0 + M_0 \|q_{tt}^-(T)\|_{1,-} \Big[\|v_{\kappa
tt}(T)\|_{1,+} + 1 \Big]
+ CT \mP(\sup_{t\in[0,T]}\Ek(t))\\
\le&\ M_0(\delta) + \delta \sup_{t\in[0,T]}\Ek(t) + C T
\mP(\sup_{t\in[0,T]}\Ek(t))\,,
\end{align*}
where $\|v_{\kappa tt}(T)\|_{0,+} \le M_0 + CT\mP(\sup_{t\in[0,T]}
\Ek(t))$ and Young's inequality are used to obtain the last
inequality.

It remains to estimate the first term of $\mI_{21_{a_1}}$ in order
to complete the estimate of $\mI_{21}$. We write the first term as
\begin{align*}
&- \int_0^T \int_\Gpm (q^+_{tt} - q^-_{tt}) \sqrt{\gk} (v^+ - v^-)
\cdot \partial_t^4\nk dS dt \qquad(\equiv \mI_3)\\
&+ \int_0^T \int_\Gpm q^+_{tt} \sqrt{\gk} (v^+ - v^-) \cdot
\partial_t^4\nk dS dt\,. \qquad(\equiv \mI_4)
\end{align*}
By $n_{\kappa t} = - g_\kappa^{\alpha\beta} (v_{\kappa,\alpha}\cdot
\nk) \eta_{\kappa,\beta}$,
\begin{align*}
\mI_4 =&\ -\int_0^T \int_\Gpm q_{tt}^+ \sqrt{\gk} (v^+ -
v^-)\cdot\eta_{\kappa,\beta}g_\kappa^{\alpha\beta} (v_{\kappa
ttt,\alpha}\cdot \nk) dS dt \\
& + \int_0^T \int_\Gpm q_{tt}^+ F(\partial\eta_\kappa,
\partial v_\kappa, \partial v_{\kappa t}) (\partial v_{\kappa tt}\cdot \nk + 1)dS
dt
\end{align*}
where the second integral is bounded by $CT\mP(\sup_{t\in [0,T]}
\Ek(t))$. For the first term, 
\begin{align*}
&\int_0^T \int_\Gpm q_{tt}^+ \sqrt{\gk} (v^+ -
v^-)\cdot\eta_{\kappa,\beta}g_\kappa^{\alpha\beta} (v_{\kappa
ttt,\alpha}\cdot \nk) dS dt \\
=&\int_0^T \int_\Gpm q_{tt}^+ \sqrt{\gk} (v^+ -
v^-)\cdot\eta_{\kappa,\beta}g_\kappa^{\alpha\beta} \Big[(v_{\kappa
ttt}\cdot \nk)_{,\alpha} - (v_{\kappa ttt} \cdot \nk_{,\alpha})\Big]
dS dt\,.
\end{align*}
It follows from $H^{0.5}(\Gpm)$-$H^{-0.5}(\Gpm)$ duality pairing and
(\ref{trace}) that the term with $(v_{\kappa ttt}\cdot
n_{\kappa,\alpha})$ is also bounded by $CT\mP(\sup_{t\in [0,T]}
\Ek(t))$.

Let $\xi$ be a non-negative cut-off function so that
$\supp\xi\subset \bigcup_i \supp\alpha_i$ and $\xi = 1$ on $\Gpm$.
Integrating by parts in space, since $\bdy\Gpm = \phi$, by the
divergence theorem,
\begin{align}
&\int_0^T\int_\Gpm q_{tt}^+ \sqrt{\gk} (v^+ - v^-)\cdot
\eta_{\kappa,\beta} g_\kappa^{\alpha\beta} (v_{\kappa
ttt}\cdot\nk)_{,\alpha} dS dt \nonumber\\
=&\ -\int_0^T \int_\Gpm \Big[\xi \sqrt{\gk} (v^+ - v^-)\cdot
\eta_{\kappa,\beta} g_\kappa^{\alpha\beta}\Big]_{,\alpha} q_{tt}^+
(v_{\kappa ttt}\cdot\nk) dS dt \quad(\equiv \mI_{41}) \nonumber\\
&\ - \int_0^T \int_\Gpm q_{tt,\alpha}^+ \xi (v^+ - v^-)\cdot
\eta_{\kappa,\beta} g_\kappa^{\alpha\beta} v_{\kappa ttt}^i
(\ak)_i^j N_j dS dt \nonumber\\
=&\ \mI_{41} - \int_0^T \int_\Op (\ak)_i^j \Big[\xi q_{tt,\alpha}^+
(v^+ - w)\cdot\eta_{\kappa,\beta} g_\kappa^{\alpha\beta} v_{\kappa
ttt}^i\Big]_{,j} dx dt\,, \label{bdytoint}
\end{align}
where $w$ is an $H^{5.5}$-extension of $v^-$ to $\Op$. By
(\ref{trace}), $\mI_{41}$ can be bounded by $CT\mP(\sup_{t\in [0,T]}
\Ek(t))$ as well. For the rest terms, there are two worst cases:
when the derivative $\partial_j$ hits $q_{tt,\alpha}^+$ or
$v_{\kappa ttt}^i$. For the latter case, by inequality
(\ref{Morrey}),
\begin{align*}
\|\xi (\ak)_i^j v_{\kappa ttt,j}^i - [\xi (\ak)_i^j
v_{ttt,j}^i]_\kappa \|_{0,+} \le C \kappa \|\ak\|_{3,+}
\|v_{ttt}\|_{1,+}\,.
\end{align*}
This inequality together with the ``divergence free'' constraint
implies
\begin{align*}
\|\xi (\ak)^j_i v_{\kappa ttt,j}^i\|_{0.+} \le C\kappa \|\ak\|_{3,+}
\|v_{ttt}\|_{1,+} + C \mP(\Ek(t))
\end{align*}
and therefore by Young's inequality,
\begin{align*}
& \int_0^T \int_\Op \xi (\ak)_i^j q_{tt,\alpha}^+ (v^+ -
w)\cdot\eta_{\kappa,\beta} g_\kappa^{\alpha\beta} v_{\kappa ttt,j}^i
dx dt \\
\le&\ \delta \sup_{t\in[0,T]} \Ek(t) + C(\delta) T\mP(\sup_{t\in
[0,T]} \Ek(t))\,.
\end{align*}
For the former case, we make use of the equation (\ref{Eulerreg}a)
to substitute $(\ak)_i^k q_{,k}$ for $v_t^i$. Therefore, in this
case the worst term is
\begin{align*}
\int_0^T \int_\Op \xi \partial_\alpha[(\ak)_i^j q_{tt,j}^+] (v^+ -
w)\cdot\eta_{\kappa,\beta}g_\kappa^{\alpha\beta}[(\ak)_i^k
q_{tt,j}^+]_{\kappa} dx dt \equiv \int_0^T \int_\Op
\partial_\alpha Q_i Q_{i\kappa} F^\alpha dx dt\,,
\end{align*}
Let $Q_i = (\ak)_i^j q_{tt,j}^+$ and $F^\alpha = \xi (v^+ - w)\cdot
\eta_{\kappa,\beta} g_\kappa^{\alpha\beta}$. By the definition of
horizontal convolution by layers, we find that
\begin{align*}
\int_0^T \int_\Op
\partial_\alpha Q_i Q_{i\kappa} F^\alpha dx dt = \sum_\ell \int_0^T
\int_{[0,1]^3} (\partial_\alpha Q_i)(\theta_\ell) [\rho \star_h
\rho\star_h (Q_i(\theta_\ell))] F^\alpha(\theta_\ell) dy dt \,.
\end{align*}
Since $(\partial_\alpha Q_i)(\theta) = \Theta_\alpha^\gamma
\partial_\gamma (Q_i(\theta))$,
\begin{align*}
&\int_0^T \int_{[0,1]^3} (\partial_\alpha Q_i)(\theta_\ell) [\rho
\star_h \rho\star_h (Q_i(\theta_\ell))] F^\alpha(\theta_\ell) dx dt \\
=&\ \frac{1}{2} \int_0^T \int_{[0,1]^3} \partial_\gamma |\rho\star_h
(Q_i(\theta_\ell))|^2 F^\alpha(\theta_\ell)
(\Theta_\ell)_\alpha^\gamma dy dt + \int_0^T R dt \,,
\end{align*}
where $R = \rho\star_h \Big[F^\alpha(\theta_\ell)
(\Theta_\ell)_\alpha^\gamma \partial_\gamma Q_i(\theta_\ell)\Big] -
F^\alpha(\theta_\ell) (\Theta_\ell)_\alpha^\gamma \rho\star_h
\Big[\partial_\gamma Q_i(\theta_\ell)\Big]$ and by inequality
(\ref{Morrey}), since $\nabla Q_i \sim F_1(\partial\eta_\kappa)
v_{ttt} + F_2(\partial\eta_\kappa,\partial v_\kappa,\partial
v_{\kappa t})\nabla q + F_3(\partial\eta_\kappa,\partial
v_\kappa)\nabla q_t$,
\begin{align*}
\int_0^T |R| dt &\le C \kappa \int_0^T \|F(\theta)
\Theta_\ell\|_{W^{1,\infty}([0,1]^3)} \|\partial
(Q_i(\theta))\|_{L^2([0,1]^3)}dt \\
&\le M_0(\delta) + \delta \sup_{t\in[0,T]} \Ek(t) + C T
\mP(\sup_{t\in[0,T]}\Ek(t))\,.
\end{align*}
Integrating by parts in space,
\begin{align*}
\int_0^T \int_{[0,1]^3} \partial_\gamma |\rho\star_h
(Q_i(\theta_\ell))|^2 F^\alpha(\theta_\ell)
(\Theta_\ell)_\alpha^\gamma dy dt \le C T \mP(\sup_{t\in[0,T]}
\Ek(t))\,.
\end{align*}
Combining all the estimates above, we find that
\begin{align}
\mI_4 \le M_0(\delta) + \delta \sup_{t\in[0,T]} \Ek(t) + C(\delta) T
\mP(\sup_{t\in[0,T]}\Ek(t))\,. \label{mI4}
\end{align}
Now we turn our attention to $\mI_{22}$ before estimating $\mI_3$.
By the ``divergence free'' constraint (\ref{Eulerreg}b),
\begin{align*}
\int_0^t \mI_{22} dt = \sum_{\sign = \pm} \int_0^t
\int_{\Omega^\sign} q_{ttt}^\sign \Big[ (a_{\kappa ttt})^j_i
v_{,j}^{\sign i} + 3 (a_{\kappa tt})^j_i v_{t,j}^{\sign i} + 3
(a_{\kappa t})^j_i v_{tt,j}^{\sign i} \Big] dx dt\,.
\end{align*}
As shown in \cite{CoSh2007}, it follows from integrating by parts in
time that
\begin{align}
\int_0^T \int_{\Omega^\pm} (a_{\kappa tt})^j_i v_{t,j}^{\pm i}
q_{ttt}^\pm dx ds \le \delta \sup_{t\in[0,T]} \Ek(t) + M_0(\delta) +
CT\mP(\sup_{t\in[0,T]} \Ek(t))\,. \label{I22m}
\end{align}
For the first and the third term, we follow \cite{CoSh2007} and obtain
\begin{align*}
&\sum_{\sign = \pm} \int_0^T \int_{\Omega^\sign} q_{ttt}^\sign \Big[
(a_{\kappa ttt})^j_i v_{,j}^{\sign i} + 3 (a_{\kappa t})^j_i
v_{tt,j}^{\sign i} \Big] dx dt \\
\le& \sum_{\sign = \pm} \int_0^T \int_{\Omega^\sign} \mJ_\kappa^{-1}
(\ak)^r_s (\ak)_i^j \Big[v_{\kappa tt,r}^s v_{,j}^{\sign i} + 3
v_{\kappa,r}^s v^{\sign i}_{tt,j} \Big] q_{ttt}^\sign dx dt \qquad(\equiv \mI_{22_a})\\
&- \sum_{\sign = \pm} \int_0^T \int_{\Omega^\sign} \mJ_\kappa^{-1}
(\ak)_i^r (\ak)_s^j\Big[v_{\kappa tt,r}^s v_{,j}^{\sign i} + 3
v_{\kappa,r}^s v^{\sign i}_{tt,j} \Big] q_{ttt}^\sign dx dt \quad(\equiv \mI_{22_b})\\
& + \delta \sup_{t\in[0,T]} \Ek(t) + M_0(\delta) +
CT\mP(\sup_{t\in[0,T]} \Ek(t)) + C(\delta) \Big[\|v^+_t\|^2_{2.5,+} + \|v^+\|^2_{3.5,+} \\
& + \|\eta_e\|^2_{4.5,+} + \int_0^T \|\sqrt{\kappa}
v^+_{tt}\|^2_{2.5,+} dt \Big]\,.
\end{align*}
Using the ``divergence free'' constraint again,
\begin{align}
\mI_{22_a} &= -3 \sum_{\sign = \pm} \int_0^T \int_{\Omega^\sign}
\mJ_\kappa^{-1} (\ak)^r_s v_{\kappa,r}^s\Big[(a_{\kappa tt})_i^j
v_{,j}^{\sign i} + 2 (a_{\kappa t})^j_i v_{t,j}^{\sign i} \Big]
q_{ttt}^\sign dx dt
\nonumber\\
&\le \delta \sup_{t\in[0,T]} \Ek(t) + M_0(\delta) +
CT\mP(\sup_{t\in[0,T]} \Ek(t))\,, \label{I22a}
\end{align}
where we apply estimates similar to (\ref{I22m}) again from
\cite{CoSh2007}.

Integrating by parts in time (and space if there is $v_{\kappa ttt}$
or $v_{ttt}$), since $\ak = \id$ on $\Go$ and $v_\kappa = 0$ outside
$\Omega'$ (or near $\Go$), we find that
\begin{align*}
\mI_{22_b} \le& \int_0^T \int_{\Gpm} \mJ_\kappa^{-1} (\ak)^r_s
(\ak)_i^j \Big[v_{\kappa ttt}^i v_{,r}^{- s} q_{tt}^-
- v_{\kappa ttt}^i v_{,r}^{+ s} q_{tt}^+ \Big] N_j dS dt \qquad(\equiv \mI_{22_{b_1}})\\
& - 3 \int_0^T \int_{\Gpm} \mJ_\kappa^{-1} (\ak)^r_s (\ak)_i^j \Big[
v_{\kappa,r}^s v^{+ i}_{ttt} q_{tt}^+ - v_{\kappa,r}^s v^{- i}_{ttt}
q_{tt}^- \Big] N_j dS dt \quad(\equiv \mI_{22_{b_2}})\\
& + \delta \sup_{t\in[0,T]} \Ek(t) + M_0(\delta) +
CT\mP(\sup_{t\in[0,T]}\Ek(t))\,,
\end{align*}
where similar estimates for the lower order terms are obtained as
those in \cite{CoSh2007}. It follows from (\ref{trace}) and (\ref{vneq})
that
\begin{align}
\mI_{21_{b_1}} &\le \int_0^T |v_{\kappa ttt}|_{-0.5} \mP(\Ek(t))
dt \le CT\mP(\sup_{t\in[0,T]} \Ek(t))\,, \\
\mI_{22_{b_2}} &= -3 \int_0^T \int_\Gpm \sqrt{\gk} \mJ_\kappa^{-1}
(\ak)_s^r \Big[(v_{ttt}^+ - v_{ttt}^-) \cdot \nk q_{tt}^- + (q^+ -
q^-)_{tt} (v_{ttt}^+ \cdot \nk) \Big] dS dt \nonumber\\
&\le \delta \sup_{t\in[0,T]} \Ek(t) + M_0(\delta) + C T
\mP(\sup_{t\in[0,T]} \Ek(t))\,, \label{I22b2}
\end{align}
where we use the boundary condition (\ref{Eulerreg}c) in the second
term and apply the same estimates as in \cite{CoSh2007}.

For $\mI_3$, we use the boundary condition (\ref{Eulerreg}d) in
$\mI_3$ and obtain
\begin{align*}
\mI_3 &= -\int_0^T \int_\Gpm \Big[\sigma \Delta_g(\eta_e)\cdot \nk +
\kappa \Delta_0 (v^+\cdot\nk)\Big]_{tt} \sqrt{\gk} (v^+ - v^-) \cdot
\partial_t^4\nk dS dt \\
&= \mI_{31} + \mI_{32}\,.
\end{align*}
The worst term of $\mI_3$ is when the time derivatives hit the
highest order term. Since $\displaystyle{\int_0^T
\Big[\|\sqrt{\kappa}v^+_{ttt}\|^2_{1.5,+} + \|\sqrt{\kappa}
v^+_{tt}\|^2_{2.5,+}\Big]dt \le \Ek(T)}$, by Young's inequality,
\begin{align}
\mI_{32} \le CT \mP(\sup_{t\in[0,T]}\Ek(t)) + \int_0^T\Big[\delta
\|\sqrt{\kappa} v_{ttt}^+\|^2_{1.5,+} + C(\delta) \|\sqrt{\kappa}
v_{tt}^+\|^2_{2.5,+}\Big] dt \label{mI31}.
\end{align}
Integrating by parts in time,
\begin{align*}
\mI_{31} =& \int_0^T \int_\Gpm \sigma \Big[\Delta_g(\eta_e)\cdot
\nk\Big]_{ttt} \sqrt{\gk} (v^+ - v^-)\cdot \partial_t^3 \nk dS dt \qquad(\equiv \mI_{{31}_a}) \\
& - \int_\Gpm \sigma \Big[\Delta_g(\eta_e)\cdot \nk\Big]_{tt}
\sqrt{\gk} (v^+ - v^-)\cdot \partial_t^3 \nk dS\Big|_{t=0}^{t=T}
\qquad(\equiv \mI_{{31}_b})\,.
\end{align*}
For $\mI_{{31}_a}$, it follows from integration by parts (in space)
that
\begin{align*}
\mI_{{31}_a} 
\le& - \int_0^T\int_\Gpm \sigma g^{\gamma\delta}
(v^+_{tt,\gamma})\cdot \nk \sqrt{\gk} (v^+ -
v^-)\cdot\eta_{\kappa,\beta}g_\kappa^{\alpha\beta} (v_{\kappa
tt,\alpha\delta}\cdot \nk) dS dt \\
& + CT\mP(\sup_{t\in[0,T]} \Ek(t)) \,.
\end{align*}
By the definition of $v_\kappa$, the inequality above implies that
\begin{align*}
\mI_{{31}_a} \le& - \int_0^T\int_\Gpm \sigma [\partial_\gamma (\rho
\star_h v^+_{tt})\cdot \nk] F^{\alpha\gamma\delta} [\partial_\delta
(\rho \star_h v^+_{tt}) \cdot \nk]_{,\alpha} dS dt \\
& + CT\mP(\sup_{t\in[0,T]} \Ek(t))\,,
\end{align*}
where $F^{\alpha\gamma\delta} = \sqrt{\gk}
g_\kappa^{\alpha\beta}g^{\gamma\delta} (v^+ -
v^-)\cdot\eta_{\kappa,\beta}$. Since $F^{\alpha\gamma\delta}$ is
symmetry in $\gamma$ and $\delta$, it follows from integration by
parts that
\begin{align*}
\mI_{{31}_a} &\le \frac{1}{2} \int_0^T\int_\Gpm \sigma
(\partial_\gamma v^+_{tt}\cdot \nk) F^{\alpha\gamma\delta}_{,\alpha}
(\partial_\delta v^+_{tt} \cdot \nk) dS dt + CT\mP(\sup_{t\in[0,T]}
\Ek(t))
\end{align*}
and hence
\begin{align}
\mI_{{31}_a} \le CT\mP(\sup_{t\in[0,T]} \Ek(t))\,. \label{mI31a}
\end{align}
Integrating by parts in space, the worst term of $\mI_{{31}_b}$ is
\begin{align*}
- \int_\Gpm \sigma F^{\alpha\gamma\delta} (\partial_\gamma
v^+_t\cdot \nk) (\partial_\delta v_{\kappa tt}\cdot\nk)_{,\alpha}
dS\,.
\end{align*}
Since $F^{\alpha\gamma\delta}_t\in L^2(0,T;L^\infty(\Gpm))$,
integrating by parts in space, we find that
\begin{align}
\mI_{{31}_b} \le \delta \sup_{t\in[0,T]} \Ek(t) + C(\delta)
\|v_t^+\|^2_{2.5,+} + C T \mP(\sup_{t\in[0,T]} \Ek(t))\,.
\label{mI31b}
\end{align}
Combining all the estimates above,
\begin{align}
&\sup_{t\in[0,T]}\Big[\|v_{ttt}\|^2_{0,\pm} + |v^+_{tt}\cdot
n|^2_1\Big] + \int_0^T |\sqrt{\kappa}
v^+_{ttt}\cdot\nk|^2_1 dt \nonumber\\
\le&\ M_0(\delta) + \delta \sup_{t\in[0,T]} \Ek(t) + C(\delta) T
\mP(\sup_{t\in[0,T]} \Ek(t)) + C(\delta) \Big[\|v^+_t\|^2_{2.5,+} \label{t3d0ineqtemp} \\
& + \|v^+\|^2_{3.5,+} + \|\eta_e\|^2_{4.5,+} + \int_0^T
\|\sqrt{\kappa} v^+_{tt}\|^2_{2.5,+} dt\Big] \nonumber\,.
\end{align}

We also need controls for $|v^-_{tt}\cdot n|_1$. It follows from
inequality (\ref{Morrey}) and the fundamental theorem of calculus
that
\begin{align*}
|w\cdot(n-\nk)|_1 &\le C \kappa \Big[M_0 +
CT\mP(\sup_{t\in[0,T]}\Ek(t))\Big] |w|_1 |\eta^+|_{4.5}\,.
\end{align*}
Therefore, by (\ref{Eulerreg}e) and the fundamental theorem of
calculus,
\begin{align*}
& |v^-_{tt}\cdot n|_1 \le |v^-_{tt}\cdot\nk|_1 + |v^-_{tt}\cdot (n-\nk)|_1 \\
\le&\ |(v^+_{tt} - v^-_{tt})\cdot\nk|_1 + |v^+_{tt}\cdot\nk|_1
+ |v^-_{tt}\cdot (n-\nk)|_1\\
\le&\ |2(v_t^+ - v_t^-)\cdot n_{\kappa t} + (v^+ - v^-)\cdot
n_{\kappa tt}|_1 + |v_{tt}^+\cdot n|_1
+ |v_{tt}\cdot (n - \nk)|_{1,\pm} \\
\le&\ M_0(\delta) + \delta \sup_{t\in[0,T]} \Ek(t) +
CT\mP(\sup_{t\in[0,T]}\Ek(t)) + |v_{tt}^+\cdot n|_1\,.
\end{align*}
Having this additional inequality, we find that
\begin{align}
& \sup_{t\in[0,T]}\Big[\|v_{ttt}\|^2_{0,\pm} + |v_{tt}\cdot
n|^2_{1,\pm}\Big] + \int_0^T
|\sqrt{\kappa} v^+_{ttt}\cdot\nk|^2_1 dt \nonumber\\
\le&\ M_0(\delta) + \delta \sup_{t\in[0,T]} \Ek(t) + C(\delta) T
\mP(\sup_{t\in[0,T]} \Ek(t)) + C(\delta) \Big[\|v^+_t\|^2_{2.5,+} \label{t3d0ineq} \\
& + \|v^+\|^2_{3.5,+} + \|\eta^+\|^2_{4.5,+} + \int_0^T
\|\sqrt{\kappa} v^+_{tt}\|^2_{2.5,+} dt\Big] \nonumber\,.
\end{align}

\subsection{Estimates for the second time-differentiated
$\kappa$-problem} Similar to (12.33) in \cite{CoSh2007}, let
$\xi\partial\partial_t^2$ act on (\ref{Eulerreg}b) and test against
$\xi\partial v_{tt}$, we find that for $\delta_1 > 0$,
\begin{align}
&\sup_{t\in[0,T]} |\partial^2 v_t\cdot n|^2_{0,\pm}  + \int_0^T
|\sqrt{\kappa}
\partial^2 v^+_{tt}\cdot\nk|^2_{0,\pm}
dt \le M_0(\delta_1) \label{t2d1ineq}\\
&\qquad + \delta_1 \sup_{t\in[0,T]} \Ek(t) + C(\delta_1) T
\mP(\sup_{t\in[0,T]} \Ek(t)) + C(\delta_1) \int_0^T \|\sqrt{\kappa}
v^+_t\|^2_{3.5,+} dt \nonumber \,.
\end{align}
\subsection{Estimates for the time-differentiated
$\kappa$-problem} Let $\xi\partial^2\partial_t$ act on
(\ref{Eulerreg}b) and test against $\xi\partial^2 v_t$, we find that
for $\delta_2>0$,
\begin{align}
&\sup_{t\in[0,T]} |\partial^3 v\cdot n|^2_{0,\pm} + \int_0^T
|\sqrt{\kappa} \partial^3 v^+_t\cdot\nk|^2_{0,\pm} dt \Big\} \le M_0(\delta_2) \label{t1d2ineq}\\
&\qquad + \delta_2 \sup_{t\in[0,T]} \Ek(t) + C(\delta_2) T
\mP(\sup_{t\in[0,T]} \Ek(t)) + C(\delta_2) \int_0^T \|\sqrt{\kappa}
v^+\|^2_{4.5,+} dt  \nonumber\,.
\end{align}
\subsection{The third tangential space differentiated $\kappa$-problem}
Similar to (12.37) in \cite{CoSh2007}, the study of the boundary
condition (\ref{Eulerreg}d) leads to the following important
elliptic
estimate: 
\begin{align}
\sup_{t\in[0,T]} |\sqrt{\kappa}\eta^+ (t)|^2_{5,\pm}
\le M_0 + C \sup_{t\in[0,T]} \Ek(t) + C T
\mP(\sup_{t\in[0,T]}\Ek(t))\,. \label{elliptic}
\end{align}
Let $\xi\partial^3$ act on (\ref{Eulerreg}) and test against
$\xi\partial^3 v$, by (\ref{elliptic}), we find that for
$\delta_3>0$,
\begin{align}
&\sup_{t\in[0,T]} |\partial^4 \eta^+\cdot n|^2_{0,\pm} + \int_0^T
|\sqrt{\kappa} \partial^4 \eta^+\cdot\nk|^2_{0,\pm} dt \nonumber\\
\le&\ M_0(\delta_3) + \delta_3 \sup_{t\in[0,T]} \Ek(t) 
+ C(\delta_3) T \mP(\sup_{t\in[0,T]} \Ek(t)) \label{t0d3ineq} \,. 
\end{align}


\subsection{A polynomial-type inequality for the energy and the existence of solutions}
Combining the div-curl estimates (\ref{divcurlp}), (\ref{divcurlm}),
the energy estimates (\ref{t3d0ineq}), (\ref{t2d1ineq}),
(\ref{t1d2ineq}), (\ref{t0d3ineq}), we find that
\begin{align*}
\Ek(t) \le&\ M_0(\delta,\delta_1,\delta_2,\delta_3) + (\delta+
\delta_1 C(\delta) + \delta_2 C(\delta_1) + \delta_3 C(\delta_2))
\sup_{t\in[0,T]} \Ek(t) \\
&+ C(\delta,\delta_1,\delta_2,\delta_3) T \mP(\sup_{t\in[0,T]}
\Ek(t))\,.
\end{align*}
Choose $\delta>0$ and $\delta_j>0$ small enough so that
$\displaystyle{\delta + \delta_1 C(\delta) + \delta_2 C(\delta_1) +
\delta_3 C(\delta_2) \le \frac{1}{2}}$, then the inequality above
implies
\begin{align}
\sup_{t\in[0,T]} \Ek(t) \le M_0 + C T \mP(\sup_{t\in[0,T]}
\Ek(t))\,. \label{polytype}
\end{align}
Therefore, there exists $T_1>0$ independent of $\kappa$ so that
\begin{align}
\sup_{t\in[0,T_1]} \Ek(t) \le 2 M_0\,. \label{kappaindep}
\end{align}
This $\kappa$-independent estimate guarantees the existence of a
solution to problem (\ref{Euler}) by passing $\kappa\to 0$.

\subsection{Removing the additional regularity assumptions on the
initial data}\label{initial} In the previous sections, we in fact
assume that $v$ is smooth enough so that we can directly
differentiate the Euler equation (\ref{Eulerreg}b) and test with
suitable test functions. This requires higher regularity of the
initial data, namely, $u_0^\pm\in H^{10.5}(\Omega^\pm)$ and $\Gpm\in
H^7$. As in \cite{CoSh2007}, this can be achieved by mollifying the
interface by the horizontal convolution by layers and mollifying the
initial velocity by the usual Fredrich's mollifiers.

\subsection{A posteriori elliptic estimates}\label{aposteriori} As in \cite{CoSh2007}, by
exactly the same proof, we find that for $T$ sufficiently small,
\begin{align}
\sup_{t\in[0,T]} [|\Gpm(t)|_{5.5} + \|v\|_{4.5,\pm} +
\|v_t\|_{3,\pm}] \le {\mathcal M}_0\,, \label{aposterioriestimate}
\end{align}
where ${\mathcal M}_0$ is some polynomial of $M_0$.

\section{Optimal regularity for the initial data}\label{optimal}
In the previous discussion, the existence of the solution requires
the initial data $u_0^\pm\in H^{4.5}(\Omega^\pm)$. We show that this
requirement can be loosened to $u_0^\pm\in H^3(\Omega^\pm)$ and
$\Gpm\in H^4$ in this section, by assuming that we already have a
solution to the problem.

In this section, we study the problem in the Eulerian framework. To
start the argument, we define the energy function $\E(t)$ first. Let
$\E(t)$ be define by
\begin{align*}
\E(t) =&\ |\Gpm(t)|^2_4 +
\|u^+\|^2_{H^3(\Op(t))} +
\|u^-\|^2_{H^3(\Om(t))} + \|u_t^+\|^2_{H^{1.5}(\Op(t))} \\
& + \|u_t^-\|^2_{H^{1.5}(\Om(t))} + \|u_{tt}^+\|^2_{L^2(\Op(t))} +
\|u_{tt}^-\|^2_{L^2(\Om(t))} \,.
\end{align*}
Then for the pressure function $p^\pm$, we have the following
estimate:
\begin{align*}
\|p^+\|^2_{H^{2.5}(\Op(t))} + \|p^-\|^2_{H^{2.5}(\Om(t))} +
\|p^+_t\|^2_{H^1(\Op(t))} + \|p^-_t\|^2_{H^1(\Om(t))} \le
C\mP(\E(t))\,.
\end{align*}

The estimates for $\curl u^\pm$ are essentially identical, while the
estimates for $\div u^\pm$ are trivial because of the divergence
free constraint (\ref{Euler}b). Therefore,
\begin{align}
\sup_{t\in[0,T]} &\Big[\|\curl u^+\|^2_{H^{2.5}(\Op(t))} + \|\curl
u^-\|^2_{H^{2.5}(\Om(t))} + \|\curl u^+_t\|^2_{H^1(\Op(t))} \nonumber\\
&+ \|\curl u^-_t\|^2_{H^1(\Om(t))} \|\div u^+\|^2_{H^{2.5}(\Op(t))}
+ \|\div u^-\|^2_{H^{2.5}(\Om(t))} \nonumber\\
& + \|\div u^+_t\|^2_{H^1(\Op(t))} + \|\div
u^-_t\|^2_{H^1(\Om(t))}\Big] \label{divcurluest}\\
\le M_0(\delta) &+ \delta \sup_{t\in[0,T]}\E(t) +
CT\mP(\sup_{t\in[0,T]} \E(t)) \nonumber
\end{align}
where $M_0(\delta) =
M_0(|\Gamma|^2_4,\|u_0^+\|^2_{3,+},\|u_0^-\|^2_{3,-},\delta)$.

\begin{remark} \label{transportrequirement}
The reason for not analyzing the problem in the ALE formulation is
that in the minus region, the transported velocity $a^T(v^- -
v_e^-)$ is only as regular as $\nabla\eta^+$, which is less regular
than the velocity $v^\pm$. This prevents from obtaining the
estimates for $\curl v^\pm$ in $H^{2.5}(\Omega^\pm)$. With the
Eulerian formulation, the transport velocity $u^\pm$ is
$H^3(\Omega^\pm(t))$, and the analysis goes through.
\end{remark}\vspace{.1in}

Note that the need of that $\eta$ is more regular than $u$ (or $v$)
is only for the study of the $\kappa$-problem, where the estimate of
the boundary integrals with artificial viscosity $\kappa$ requires
that $\eta_\kappa$ is as regular as $\sqrt{\kappa} v$ (which is
``more regular'' than $v$ by the definition of the energy function
$\Ek$). This observation implies that without worrying about the
$\kappa$-terms, the energy estimates still follow. Therefore, by the
identities
\begin{align*}
\frac{d}{dt} \int_{\Omega^\pm(t)} f(y,t) dy = \int_{\Omega^\pm(t)}
(f_t + \nabla_{u^\pm} f)(y,t) dy
\end{align*}
and
\begin{align*}
\int_{\Gpm(t)} f(y,t) dS_y = \int_\Gpm f(\eta(x,t),t) \sqrt{g}
dS_x\,,
\end{align*}
we can show, as shown in the previous sections, that
\begin{align}
&\sup_{t\in[0,T]}\Big[ \|u_{tt}^+\|^2_{L^2(\Op(t))} +
\|u_{tt}^-\|^2_{L^2(\Om(t))} + |\partial^2 v \cdot n|^2_{0,\pm}
+ |\partial v_t\cdot n|^2_{0,\pm}\Big] \label{energyestu}\\
\le&\ M_0(\delta) + \delta \sup_{t\in[0,T]} \E(t) +
CT\mP(\sup_{t\in[0,T]} \E(t)) \,, \nonumber
\end{align}
where the interior estimates are for the Eulerian velocity $u^\pm$
while the boundary estimates are for the ALE velocity $v^\pm$.

In addition to $|\Gpm(t)|^2_4$, it suffices to establish bounds for
$|\partial^2 u^\pm\cdot m|^2_{H^{0.5}(\Gpm(t))}$ and $|\partial
u_t^\pm\cdot m|^2_{L^2(\Gpm(t))}$, where $m$ denotes the unit
outward normal of $\Op(t)$. We remark here that we use different
notations to distinguish the ``normal'' on $\Gpm$ and the normal on
$\Gpm(t)$. In general, $n=m\circ\eta$.

The bounds for $|\partial u_t^\pm\cdot m|^2_{L^2(\Gpm(t))}$ follows
from the energy estimate (\ref{energyestu}). Since
\begin{align*}
u_{t,j}^{\pm i} m^i = \Big[a_j^k v_{t,k}^{\pm i} n^i - a_j^k (v^{\pm
m} a_m^\ell v_{,\ell}^{\pm i})_{,k} n^i\Big]\circ\eta^{-1}
\quad\text{on \ $\Gpm(t)$}\,,
\end{align*}
multiplying $\tau_\alpha^j =
\Big(\frac{\eta^j_{,\alpha}}{|\eta_{,\alpha}|}\Big)\circ\eta^{-1}$
on both side, by $\|\delta - a\|_{2,+} \sim {\mathcal O}(t)$ we find
that
\begin{align}
|\partial_\alpha u^\pm_t \cdot m|_{L^2(\Gpm(t))} &\le C
\Big[|\partial_\alpha v_t^\pm\cdot n|_0 + |\partial_\alpha (v^{\pm
\ell} v^{\pm i}_{,\ell}) n^i|_0 \Big] +
CT\mP(\sup_{t\in[0,T]} \E(t)) \nonumber\\
&\le M_0(\delta) + \delta \sup_{t\in[0,T]} \E(t) +
CT\mP(\sup_{t\in[0,T]} \E(t))\,. \label{utdotmest}
\end{align}

For the bound of $|\partial^2 u^\pm\cdot m|^2_{H^{0.5}(\Gpm(t))}$,
we first estimate $|\partial^2 v^\pm\cdot n|^2_{0.5}$. Similar to
the a posteriori estimate in \cite{CoSh2007}, by studying the boundary
condition
\begin{align*}
\partial_t [(p^+ - p^-)\circ\eta^+ \cdot n] = -[\sqrt{g}
g^{\alpha\beta}\Pi^i_j v^{+j}_{,\beta} + \sqrt{g}
(g^{\nu\mu}g^{\alpha\beta} -
g^{\alpha\nu}g^{\mu\beta})\eta^i_{,\beta} \eta^j_{,\nu}
v^{+j}_{,\mu}]_{,\alpha}\,,
\end{align*}
where $\eta$ and $g$ are formed from $v^+$, we find that
\begin{align*}
|\partial^2 v^+\cdot n|^2_0 &\le M_0(\delta) + \delta |v^+|^2_2 + C
\Big[\mP(|\Gpm|^2_{3.5}, \E(t)) |\eta^+
- \id|^2_2 + |p^+_t - p^-_t|^2_0\Big]\,, \\
|\partial^2 v^+\cdot n|^2_1 &\le M_0(\delta) + \delta |v^+|^2_3 + C
\Big[\mP(|\Gpm|^2_{3.5}, \E(t)) |\eta^+ - \id|^2_3 + |p^+_t -
p^-_t|^2_1\Big]\,,
\end{align*}
and hence by interpolations,
\begin{align*}
\sup_{t\in[0,T]} |\partial^2 v^+\cdot n|^2_{0.5} \le&\ M_0(\delta) +
\delta \sup_{t\in[0,T]} \E(t) + C T
\mP(\sup_{t\in[0,T]} \E(t)) \\
& + C \Big[\|p^+_t\|^2_{H^1(\Op(t))} +
\|p^-_t\|^2_{H^1(\Om(t))}\Big]\,.
\end{align*}
By the elliptic problem
\begin{alignat*}{2}
\Delta p_t^\pm &= 2 \nabla u^\pm_t:(\nabla u^\pm)^T &&\qquad\text{in \
$\Omega^\pm(t)$}\,,\\
\frac{\partial p_t^\pm}{\partial n} &= (u_{tt}^\pm + \nabla_{u^\pm_t}
u^\pm + \nabla_{u^\pm} u^\pm_t)\cdot n &&\qquad\text{on \ $\Gpm(t)$}\,,
\end{alignat*}
we find that
\begin{align}
\|p_t^\pm\|^2_{H^1(\Omega^\pm(t))} &\le C\Big[\|\nabla u^\pm_t:(\nabla
u^\pm)^T\|^2_{H^{0.5}(\Omega^\pm(t))} +
\|u_{tt}^\pm + \nabla_{u_t} u + \nabla_u
u_t\|^2_{L^2(\Omega^\pm(t))}\Big] \nonumber\\
&\le M_0(\delta) + \delta \sup_{t\in[0,T]} \E(t) +
CT\mP(\sup_{t\in[0,T]} \E(t))\,, \label{ptest}
\end{align}
where we use (\ref{energyestu}) to estimate
$\|u_{tt}^\pm\|^2_{L^2(\Omega^\pm(t))}$ and Young's inequality for the
other terms. Therefore,
\begin{align*}
\sup_{t\in[0,T]} |\partial^2 v^+\cdot n|^2_{0.5} \le M_0(\delta) +
\delta \sup_{t\in[0,T]} \E(t) + CT\mP(\sup_{t\in[0,T]} \E(t))\,.
\end{align*}
By similar argument of obtaining (\ref{utdotmest}), we find that
\begin{align}
\sup_{t\in[0,T]} |\partial^2 u^+\cdot m|^2_{H^{0.5}(\Op(t))} \le
M_0(\delta) + \delta\sup_{t\in[0,T]} \E(t) + CT\mP(\sup_{t\in[0,T]}
\E(t))\,. \label{udotmest}
\end{align}
The estimate of $|\partial^2 u^-\cdot m|^2_{H^{0.5}(\Gpm(t))}$ follows
from the boundary condition (\ref{Euler}d), as discussed in the previous
sections.

It then follows from (\ref{divcurluest}), (\ref{energyestu}),
(\ref{utdotmest}) and (\ref{udotmest}) that
\begin{align}
\sup_{t\in[0,T]} \Big[\E(t) - |\Gpm(t)|^2_4\Big] \le M_0(\delta) +
\delta\sup_{t\in[0,T]} \E(t) + CT\mP(\sup_{t\in[0,T]} \E(t))\,.
\label{tempestu1}
\end{align}
With this estimate in mind, we can estimate
$\|p^\pm\|^2_{H^{2.5}(\Omega^\pm(t))}$ in the way we obtain
(\ref{ptest}) and find that $\|p^\pm\|^2_{H^{2.5}(\Omega^\pm(t))}$
satisfies the same inequality. Let $h$ be the height function of
$\Gpm(t)$ over $\Gpm$. By exactly the same argument as in \cite{CoSh2007},
\begin{align*}
\sup_{t\in[0,T]} |h(t)|^2_{H^4} \le M_0(\delta) + \delta\sup_{t\in[0,T]}
\E(t) + CT\mP(\sup_{t\in[0,T]} \E(t))\,,
\end{align*}
and hence
\begin{align}
\sup_{t\in[0,T]}|\Gpm(t)|^2_4 \le M_0(\delta) + \delta \sup_{t\in[0,T]}
\E(t) + CT\mP(\sup_{t\in[0,T]} \E(t))\,. \label{Gpmest}
\end{align}

Combining (\ref{tempestu1}) and (\ref{Gpmest}), by choosing
$\delta>0$ small enough, we obtain the same polynomial-type
inequality as (\ref{polytype}), and therefore there exists a $T>0$
so that
\begin{align*}
\sup_{t\in[0,T]} \E(t) \le 2M_0\,.
\end{align*}
This proves the claim of the optimal regularity of the initial data
to obtain the solution to (\ref{Euler}).

\begin{remark}
The argument in this section can also be used to prove the existence
theorem for the one phase problem studied in \cite{CoSh2007}, provided
the same regularity of the initial velocity $u_0$ and the initial
interface $\Gamma$ are given.
\end{remark}

\section{Uniqueness of solutions} \label{sec::uniqueness}
Suppose that $(v^1,q^1)$ and $(v^2,q^2)$ are both
solutions to (\ref{Eulerreg}) (with $\kappa = 0$, $\mJ = 1$) with
initial data $u_0^\pm\in H^{6}(\Omega^\pm)$ and $\Gpm\in H^{7}$. Let
$\eta_e^1$ and $\eta_e^2$ be defined as in Section \ref{linearkprob}
(with associated cofactor matrices $a^1$ and $a^2$), and set
\begin{align*}
\E_j (t) = \|\Gpm(t)\|^2_{7} + \sum_{k=0}^4 \|\partial_t^k
v^j(t)\|^2_{6-1.5k,\pm} + \sum_{k=0}^3 \|\partial_t^k
q^j(t)\|^2_{5.5-1.5k,\pm} \,.
\end{align*}
By the existence theorem, both $\E_1(t)$ and $\E_2(t)$ are bounded
by a constant ${\mathcal M}_0$ depending on the data $u_0$ and
$\Gamma$ on a time interval $0\le t\le T$ for $T$ small enough.
 
Let $w=v^1-v^2$, $w_e = M^+ w^+$ with associate flow map $\zeta_e
=\int_0^t M^+ w^+ ds$, and $r=q^1-q^2$. The goal in this section is
to show that $w=0$ by showing that the energy function
\begin{align*}
E(t) = \|v(t)\|^2_{3,\pm} + \|v_t(t)\|^2_{1.5,\pm} +
\|v_{tt}(t)\|^2_{0,\pm}
\end{align*}
is actually zero for a short time.
\subsection{The divergence and curl estimates}
In $\Op$, $v^{1+}$ and $v^{2+}$ satisfy
\begin{align*}
\rho^+ v^{+j}_t + a_j^\ell q_{,\ell} = 0 \qquad \text{for \
$(v,q)=(v^1,q^1)$ or $(v,q)=(v^2,q^2)$}\,.
\end{align*}
Let $\varepsilon_{ijk} a_j^r \nabla_r$ act on both sides of the
equality above and form the difference of the two equations, after
integrating in time from $0$ to $t$, we find that
\begin{align*}
& \rho^+ \curl w^{+i}(t) = \varepsilon_{ijk} \int_0^t \Big[ (a^1 -
a^2)_k^\ell (v^1_t)^{+j}_{,\ell} + [(a^2)_k^\ell - \delta_k^\ell]
w_{t,\ell}^{+j}\Big] ds \\
=&\ \varepsilon_{ijk} [(a^2)_k^\ell(t) - \delta_k^\ell]
w_{,\ell}^{+j}(t) + \varepsilon_{ijk} \int_0^t \Big[ (a^1 -
a^2)_k^\ell (v^1_t)^{+j}_{,\ell} - (\partial_t a^2)_k^\ell
w_{,\ell}^{+j}\Big] ds\,.
\end{align*}
Therefore, by $\|a^2(t) - \delta(t)\|_{3.5,+} \le CT$,
\begin{align*}
\sup_{t\in[0,T]} \|\curl w^+(t)\|^2_{2,+} \le CT \sup_{t\in[0,T]}
\|w^+(t)\|^2_{3,+} \,,
\end{align*}
where $C$ depends on ${\mathcal M}_0$ only. By the ``divergence
free'' constraint $a^j_i v^i_{,j} = 0$, we similarly have
\begin{align*}
\sup_{t\in[0,T]} \|\div w^+(t)\|^2_{2,+} \le CT \sup_{t\in[0,T]}
\|w^+(t)\|^2_{3,+}\,.
\end{align*}
 
 
For the divergence and curl estimates in $\Om$, let $\tv^1$ and
$\tv^2$ denote the Lagrangian velocity in $\Om$, that is,
\begin{align*}
\tv^j = \partial_t \tilde{\eta}^j = u^j\circ \tilde{\eta}^j
\qquad\text{in \ $\Om$}\,,
\end{align*}
where $u^j$ is the Eulerian velocity in $\Om$. The same argument as
above shows that
\begin{align}
\sup_{t\in[0,T]} \Big[ \|\curl \tilde{w} (t)\|^2_{2,-} + \|\div
\tilde{w} (t)\|^2_{2,-} \Big] \le CT \sup_{t\in[0,T]}
\|\tilde{w}(t)\|^2_{3,-}\,, \label{tempdivcurl}
\end{align}
where $\tilde{w} = \tv^1 - \tv^2$. We now convert
(\ref{tempdivcurl}) to the inequality with $w$ replacing
$\tilde{w}$.
 
Let $\zeta^j = (\tilde{\eta}^j)^{-1}\circ (\eta^j_e)^{-1}$ and $b^j
= \nabla \zeta^j$ for $j=1,2$. Then
\begin{align*}
\|\curl w(t)\|^2_{2,+} &= \sum_{i=1}^{\mathfrak n}
\|\varepsilon_{ijk} [\tv^1\circ\zeta^1 - \tv^2\circ\zeta^2]_{,k}^j
\|^2_{2,-} = \sum_{i=1}^{\mathfrak n} \|\varepsilon_{ijk} [(b^1)_k^r
\tv_{,r}^{1j} - (b^2)_k^r \tv^{2j}_{,r}]\|^2_{2,-}\\
&= \sum_{i=1}^{\mathfrak n} \|\varepsilon_{ijk} [(b^1 - b^2)_k^r
\tv_{,r}^{1j} + (b^2 - \delta)_k^r (\tv^{1} - \tv^2)^j_{,r} + \tilde{w}^j_k]\|^2_{2,-} \\
&\le C T \sup_{t\in[0,T]} \Big[\|w^+\|^2_{3,+} +
\|\tilde{w}(t)\|^2_{3,-}\Big] \,,
\end{align*}
where we use $\|(b^1 - b^2)(t)\|^2_{2,-} \le C T \sup_{t\in[0,T]}
\Big[ \|w^+\|^2_{3,+} + \|\tilde{w}\|^2_{3,-}\Big]$ by studying the
time derivative of $b^1 - b^2$. Similar argument shows that
\begin{align*}
\|\div w\|^2_{3,-} &\le C T \sup_{t\in[0,T]} \Big[ \|w^+\|^2_{3,+} +
\|\tilde{w}\|^2_{3,-}\Big]\,,\\
\|\tilde{w}\|^2_{3,-} &\le C T \sup_{t\in[0,T]} \Big[
\|w^+\|^2_{3,+} + \|\tilde{w}\|^2_{3,-}\Big]\,.
\end{align*}
Thus for $T>0$ small enough, we find that
\begin{align*}
\sup_{t\in[0,T]} \Big[ \|\curl w (t)\|^2_{2,-} + \|\div w
(t)\|^2_{2,-} \Big] \le CT \sup_{t\in[0,T]} \|w(t)\|^2_{3,-}\,.
\end{align*}
The estimates for the divergence and curl of $w_t$ are similar, so
we omit here. In a nut shell,
\begin{align}
\sup_{t\in[0,T]} \Big[&\|\curl w(t)\|^2_{2,\pm} + \|\div
w(t)\|^2_{2,\pm} + \|\curl w_t(t)\|^2_{0.5,\pm} \nonumber\\
& + \|\div w_t(t)\|^2_{0.5,\pm} \Big] \le CT \sup_{t\in[0,T]}
E(t)\,. \label{divcurlunique}
\end{align}
 
\begin{remark}
We cannot obtain estimate (\ref{divcurlunique}) by studying the
equations for $v^-$ (with the transport velocity) directly, since it
also requires the study of $\zeta_e^+$ as we did in the estimates of
the $\kappa$-problem. This requires $u_0^\pm\in H^5(\Omega^\pm)$ at
least.
\end{remark}
 
\subsection{The boundary estimates}
First we note that $(w,r)$
satisfies
\begin{subequations}\label{unique}
\begin{alignat}{2}
\rho^\pm w^{\pm i}_t + (a^1)_j^\ell (v^{1-} -& v_e^{1-})^j
w_{,\ell}^{- i} + (a^1)^j_i r^\pm_{,j} = F^\pm
&&\qquad\text{in \ $[0,T]\times\Omega^\pm$}\,,\\
(a^1)^j_i w^{\pm i}_{,j} &= (a^2 - a^1)^j_i v^{2 \pm i}_{,j} &&\qquad\text{in \ $[0,T]\times\Omega^\pm$}\,,\\
(r^+ - r^-)n_1 &= - \sigma \Pi^1 g^{1\alpha\beta} \zeta_{e,\alpha\beta} + B &&\qquad\text{on \ $[0,T]\times\Gpm$}\,,\\
w^+ \cdot n_1 &= w^- \cdot n_1 + b &&\qquad\text{on \ $[0,T]\times\Gpm$}\,,\\
w^- \cdot n_1 &= 0 &&\qquad\text{on \ $[0,T]\times\Go$}\,,\\
w^\pm(0) &= 0 &&\qquad\text{in \ $\{t=0\}\times\Omega^\pm$}\,,
\end{alignat}
\end{subequations}
where
\begin{align*}
F^\pm &= \Big[(a^2 - a^1)_j^\ell (v^{2-} - v^{2-}_e)^j - (a^1)_j^\ell (w^{-j}
- w^{-j}_e) \Big](v^{2-}_{,\ell})^i + (a^2 - a^1)_j^\ell q^{2\pm}_{,\ell}\,, \\
B &= -\sigma \Delta_{g^1-g^2}(\eta_e^2) + (q^{2+} - q^{2-}) (n_2 - n_1) \,, \\
b &= (v^{2+} - v^{2-}) \cdot (n_2 - n_1) \,.
\end{align*}
 
The main difference between (\ref{unique}) and the uniqueness
argument for the one phase problem (see Section 15 in \cite{CoSh2007})
is on the additional term $b$. In order to obtain estimate similar
to (\ref{polytype}) (except that in the uniqueness proof, we only
study the second time differentiated problem), we need to estimate
the integral
\begin{align*}
\int_0^T \int_\Gpm r^-_{tt} (w^+_{tt} - w^-_{tt})\cdot n_1 dS dt\,.
\end{align*}
By (\ref{unique}e),
\begin{align*}
(w^+_{tt} - w^-_{tt})\cdot n = b_{tt} - 2 (w^+_t - w^-_t)\cdot n_t -
(w^+ - w^-) n_{tt}\,.
\end{align*}
The only term we need to worry about is the integral with integrand
$r^-_{tt} (v^{2+} - v^{2-})\cdot (n_2 - n_1)_{tt}$. The worst term
of this integral is (after integration by parts in time)
\begin{align*}
&\ - \int_0^T \int_\Gamma r^-_{tt} \Big[g^{1\alpha\beta}
(v^{1+}_{t,\alpha}\cdot n_1)\eta^1_{e,\beta} - g^{2\alpha\beta}
(v^{2+}_{t,\alpha}\cdot
n_2)\eta^2_{e,\beta}\Big] dS dt\\
=& - \int_\Gamma r^-_{t} \Big[g^{1\alpha\beta}
(v^{1+}_{t,\alpha}\cdot n_1)\eta^1_{e,\beta} - g^{2\alpha\beta}
(v^{2+}_{t,\alpha}\cdot n_2)\eta^2_{e,\beta}\Big] dS
\Big|_{t=0}^{t=T}\\
& +\int_0^T \int_\Gamma r^-_{t} \Big[g^{1\alpha\beta}
(v^{1+}_{t,\alpha}\cdot n_1)\eta^1_{e,\beta} - g^{2\alpha\beta}
(v^{2+}_{t,\alpha}\cdot n_2)\eta^2_{e,\beta}\Big]_t dS dt\,.
\end{align*}
Add and subtract terms to form the integrand in terms of $w$,
$\eta^1_e - \eta^2_e$, $n_1 - n_2$ or $g^1 - g^2$. By Young's
inequality, the first term (time boundary term) is bounded by
$(\delta + C(\delta)T)\sup_{t\in[0,T]} E(t)$, where $C(\delta)$
depends on $\delta$ and ${\mathcal M}_0$. For the second term (time
interior term), the worse term occurs when time differentiating
$\partial v_t$. For this worst case, we can transform the surface
integral to the interior interior using the divergence theorem as we
did in (\ref{bdytoint}). Therefore,
\begin{align*}
\int_0^T \int_\Gamma r^-_{t} \Big[g^{1\alpha\beta}
(v^{1+}_{t,\alpha}\cdot n_1)\eta^1_{e,\beta} - g^{2\alpha\beta}
(v^{2+}_{t,\alpha}\cdot n_2)\eta^2_{e,\beta}\Big]_t dS dt \le
(\delta + C(\delta) T) \sup_{t\in[0,T]} E(t)\,.
\end{align*}
The estimates with the addition of the forcing $F$, the right-hand
side of (\ref{unique}c), and $B$ is already done in \cite{CoSh2007}. It
suffices to show that $|\partial^2 w\cdot n_1|_{0.5}$ has the same
bound. However, since
\begin{align*}
|B_t|^2_{0.5} \le C T \sup_{t\in[0,T]} E(t) + C |w|^2_{1.5} \le
(\delta +  C(\delta) T) \sup_{t\in[0,T]} E(t)\,,
\end{align*}
by study the first time derivative of (\ref{unique}c), similar to
the a posteriori estimate, we find that
\begin{align*}
\sup_{t\in[0,T]} |\partial^2 w\cdot n_1|^2_{0.5} \le (\delta +
C(\delta) T) \sup_{t\in[0,T]} E(t)\,.
\end{align*}
Therefore, with (\ref{divcurlunique}) we conclude that $E(t)$
satisfies
\begin{align*}
\sup_{t\in[0,T]} E(t) \le (\delta + C(\delta)T) \sup_{t\in[0,T]}
E(t)\,,
\end{align*}
which implies for $T$ small enough, $E(t) = 0$ and hence $w=0$. In
other words, we establish the uniqueness of the solution to the
problem.

\section*{Acknowledgments}
The research is supported by the National Science Foundation under
grants NSF DMS-0313370 and EAR-0327799.

\end{document}